\newcommand{\rev}[1]{#1}   
    \newcolumntype{L}{>{\raggedright\arraybackslash}X}
\newtheorem{theorem}{Theorem}[section]
\newtheorem{proposition}[theorem]{Proposition}
\theoremstyle{definition}
\theoremstyle{remark}
\numberwithin{equation}{section}
\newcommand{\COMMENT}[1]{}
\newcommand{\R}{{{\rm I}\kern-.13em {\rm R}}}
\newcommand{\eps}{\varepsilon}
\renewcommand{\(}{\left(}
\renewcommand{\)}{\right)}
\newcommand{\Rb}{\bar{R}}
\renewcommand{\R}{R}
\newcommand{\D}{D_x}
\newcommand{\Dc}{D_{\theta}}
\newcommand{\xt}{\tilde{x} }
\newcommand{\thetat}{ {\tilde{\theta}} }
\renewcommand{\v}{v_1}
\newcommand{\w}{v_2}
\newcommand{\TT}{\mathbb{T}}
\begin{document}

\title[] {Modeling  the Interplay of Oscillatory Synchronization and Aggregation via Cell-Cell Adhesion}


\date{}
\author{Tilmann Glimm, Daniel Gruszka}
\let\thefootnote\relax\footnotetext{ {\rev{T. Glimm is the corresponding author (email glimmt@wwu.edu).}}}
\address{Department of Mathematics, Western Washington University, Bellingham WA 98225}

\bibliographystyle{siam}
\begin{abstract}
We present a  model of systems of cells with intracellular oscillators (`clocks'). 
This is motivated by examples from developmental biology and from the behavior of organisms 
on the threshold to multicellularity.
Cells undergo random motion and adhere to each other. The adhesion strength between neighbors depends on their clock phases in addition to a constant baseline strength.
The oscillators are linked via Kuramoto-type local interactions.
The model is an advection-diffusion partial differential equation with nonlocal advection terms. We demonstrate that synchronized states correspond to Dirac-delta measure solutions
of a weak version of the equation. 
To analyze the complex interplay of aggregation and synchronization,
we then perform a linear stability analysis of the incoherent, spatially uniform state. This lets us classify possibly emerging patterns depending on model parameters.
Combining these results with numerical simulations, we determine a range of possible far-from equilibrium patterns when baseline adhesion strength is zero: There is
aggregation into separate synchronized clusters with or without global synchrony; global synchronization without aggregation; or
unexpectedly  a ``phase wave" pattern characterized by spatial gradients of clock phases. A 2D Lattice-Gas Cellular Automaton model confirms and illustrates  these results.
\end{abstract}


\maketitle


\section{Introduction}

\subsection{Overview}
The spontaneous synchronization of coupled oscillators is a common occurrence in physics, biology and social sciences that has captured the interest of mathemtical modelers for decades \cite{winfree1987,pikovsky2003,boccaletti2018}. 
Well-studied, prominent phenomena are the emergence of synchrony in neural networks, i.e,. the synchronous firing of nerve cells \cite{singer1999,hansel1995,uhlhaas2009}.
and the synchronization of the flashing of fireflies (f.ex. \cite{faust2010,ramirez-avila2019}; see also the excellent article \cite{sokol2022}).   Synchronization of 
intracellular molecular oscillatory processes is also ubiquitous in developmental biology \cite{jiang2000,bhat2019,venzin2020, deneke2016}. 

The fundamental mathematical model of synchronization is the paradigmatic Kuramoto model \cite{kuramoto1984, strogatz1991,strogatz2000,witthaut2014} which has been studied extensively, in particular  on general complex networks \cite{rodrigues2016}. 
In general, synchronization of spatially distributed oscillators  on networks 
is the focus of very active research (see e.g. \cite{boccaletti2018,dorfler2014} and the references therein). Typically these networks, while often having complex topology, are static and do not encompass spatiotemporal rearrangements of the nodes  \cite{dorfler2014,rodrigues2016}. 

This is a very appropriate assumption for many applications,
e.g. neural networks, but it does not apply to aggregation
processes in which the individual oscillators are moving in space, and crucially, there is an interplay between clocks and motion.
The study of the feedback between motion and clock synchronization in systems of moving oscillators has only relatively recently come into focus.
Among those is the work of Tanaka and coworkers on systems of chemotactic oscillators \cite{tanaka2007,iwasa2010}.
The most influential work in this area is  O'Keeffe et al.'s seminal investigation of swarming oscillators \cite{okeeffe2017}, which has stimulated a number of further papers
\cite{sar2022,okeeffe2022} and has fascinating applications to e.g. the behavior of fleets of robots \cite{barcis2020}.
O'Keeffe et al. studied systems of Kuramoto-like oscillators (``swarmalators")  represented by mass points which attract or repel each other.

One important application of systems of interacting moving oscillators is aggregation of  biological cells mediated by cell-cell adhesion.
In such situations the swarmalator model of O'Keeffe et al. has some drawbacks, most prominently
that spatial attraction between two oscillators is independent of the distance between them. {\rev{(So it is a ``global attraction/local synchronization`` model.)}} Their model leads to highly symmetrical patterns of aggregation (a single disk or ring).
In contrast, adhesive interactions between cells are local, which, combined with random motion, typically leads to irregularly shaped aggregation clusters.

We therefore propose and investigate here a model where oscillators are coupled to adhesive forces, using models of cell-cell adhesion.
{\rev{This makes it a ``local adhesion/local synchronization" model.}}
Before describing our model in  more detail, we expand on our motivation: investigating
the aggregation of biological cells with internal biological clocks. 

\subsection{Aggregation and Molecular Clocks: Dictyostelia and Myxobacteria}
Two model organisms have been studied extensively: the slime mold Dictyostelia, in particular the best-studied species {\it{Dictyostelium discoideum}} \cite{bonner2008,oss1996,maree2001} and the slime 
bacteria Myxobacteria \cite{shimkets1982,zusmanetal2007,peruani2012,thutupalli2015}. While these two lineages are phylogenetically very distant (Dictyostelia are eukaryotes and Myxobacteria prokaryotes), they display convergent evolution as 
being on the threshold of multicellularity: Both can display solitary cell behavior (though only observed in a laboratory setting for Myxobacteria \cite{delangel2020}), where individual cells behave independently of each other. But in conditions of nutrient scarcity,
both transition into a developmental stage characterized by cell aggregation which culminates in the formation of multicellular structures called fruiting bodies.

The details of the mechanisms of aggregation are quite distinct for the two lineages and research of the two lineages has been almost completely independent.  In Dictyostelia, aggregation 
has been tied to a complex interplay of production of the chemotactic agent cAMP by cells and the movement of the same cells up the cAMP gradient. This is the basis of the
celebrated Keller-Segel model \cite{keller1970,horstmann2003}. In Myxobacteria, aggregation has been explained by coordinated cell motion mediated by alignment of the rod-shaped cells and synchronization of  
the reversal rate of neighboring cells \cite{wu2009,igoshin2004}. In a recent article, Del Angel et al. \cite{delangel2020}  pointed out that despite these differences,
there are some very profound similarities in the mechanisms of aggregation and directed migration between the two organisms. They argue that in both instances, it is a complex interplay between generic processes, i.e. mechanical or physicochemical processes that 
operate similarly in living and nonliving material and agent-like processes unique to living systems. The latter are mediated through networks of interaction with neighboring cells through mechanical and chemical signals. 
A paradigm for this is cell-cell adhesion interacting with the spontaneous emergence of synchronicity in intracellular oscillators. 
In Dictyostelia, the molecular basis is the periodic production of cAMP. (It is not clear if  these oscillations are an intrinsic 
property of individual cells or if they exist only as a population-level phenomenon \cite{gregor2010}.) In Myxobacteria, the periodic reversal
of directions is controlled by the  Frz and MglAB intracellular oscillators \cite{guzzo2018}.

\subsection{Model and Results}
In our bottom-up model of oscillators coupled to adhesive forces ('sticky clocks'), we use a PDE model
for a cell density $R(t,x,\theta)$ that depends on time $t$, position $x$ and clock value $\theta$. To model cell-cell adhesion, we adapt the model of Armstrong et al. \cite{armstrongetal2006}; see also \cite{dyson2013,buttenschonetal2018,chen2020,glimmetal2014,glimmzhang2020}, which was set up to  model cell sorting
by differential adhesion \cite{steinberg1970,glazier1993} in a PDE framework. In our approach,
adhesion is modeled by an advection-diffusion equation with nonlocal advection and Kuramoto-like interactions between oscillators via advection terms in the clock variable $\theta$.

There is a baseline adhesion strength given by a parameter $J_0$. An additional differential adhesion term depends on clock phases and is encoded by  $J$. The coupling strength of oscillators is denoted by $K$. 
For $K>0$, neighbors want to synchronize, for
$K<0$, they want to anti-synchronize. Positive $J$ means that cells' adhesion to neighbors is strongest if they are in the same clock phase; for $J<0$, the adhesion is strongest 
for cells in opposite phases.  (This choice of notation is motivated by O'Keeffe et al's  swarmalator model \cite{okeeffe2017}, where $J$ encodes attraction between particles.)

In this model, spatial aggregation and synchronization can co-occur and, depending on the parameters, can 
help or hinder each other. We perform a linear stability of the spatial homogeneous, incoherent (unsynchronized) state. This leads to 
a classification of the possible types of emerging patterns (``pattern onsets").  
The pattern onsets display either globally synchronized, locally synchronized or incoherent behavior with or without spatial aggregation. We then compare the linear pattern onsets with far-from equilibrium patterns from numerical solutions 
in the case of zero baseline adhesion ($J_0=0$). 
We find that the pattern onsets obtained by
linear stability analysis generally capture the far-from equilibrium behavior: Positive clock coupling $K$ leads to synchronization, where the value of the spatial adhesion strength $J$ controls whether there is spatial aggregation and if the synchronization is global or only local within aggregated clusters.
For negative $K$ and negative $J$, the incoherent state is stable and no aggregation occurs. We found a surprising result when $K$ is negative but small in absolute value and $J$ is sufficiently large.
In this case, small diffuse aggregates form. Oscillators are not synchronized, but instead {\rev{there}} are spatial gradients of clock phases, so that there is a correlation between clock phase and spatial position (a ``phase wave" pattern). 
These results are summarized in Figure~\ref{fig_conj} with examples in one spatial dimension in Figure~\ref{fig_example_comp}, and with a two-dimensional Lattice-Gas Cellular Automaton implementation in Figure~\ref{fig_lgca_results}.

The structure of the paper is as follows: In section~\ref{section_model}, we present the model, which consists of a diffusion-advection equations with nonlocal terms, and discuss its features. We formulate a weak version of the PDE in section~\ref{section_math}, which enables us to mathematical rigorously identify synchronized states as Dirac-delta functions. In section~\ref{section_linstab}, we perform a linear stability analysis, which
leads to a classification of pattern onsets. Finally, in section~\ref{section_numerics}, we investigate the far-from-equilibrium behavior of soultions by combining the results of the linear stability analysis with 
numerical solutions of the PDE (in one spatial dimension) and an implementation via a Lattice-Gas Cellular Automaton as described above.

\section{The Model}\label{section_model}
We consider the density $\R(t,x,\theta)$ of moving oscillators. Here $t$ is time, $x$ is spatial location and $\theta$ is the phase of the internal clock. The number $N$ of ocillators located in a 
region $U$ of $n-$dimensional space with phases between $\theta_1$ and $\theta_2$ at time $t$ is thus
\[
  N= \int_{U}\int_{\theta_1}^{\theta_2} R(t,x,\theta)\, d\theta\, d^nx.
\]
This setup is applicable e.g. to biological cells that have an internal molecular clock. 

{\rev{Our model is described by a conservation law  \cite{buttenschonetal2018}, i.e. an equation of the form 
$\partial_tR(t,x,\theta)=-\nabla\cdot J_{\text{space}}(x,t,\theta)-\partial_\theta J_{\text{clock}}(t,x,\theta)$. Here $\nabla$ denotes the gradient with respect to the spatial variable $x$. In this framework, different
biological phenomena can be modeled via different fluxes $J_{\text{space}}(t,x,\theta)$ in physical space and $J_{\text{clock}}(t,x,\theta)$ in clock space. Explicitly, our model equation is given by:}}
 \begin{linenomath}
\begin{align}
\frac{\partial R}{\partial t}=\D \nabla^2R- \nabla\cdot(\v(t,x,\theta;R)R)+\Dc \frac{\partial^2 R}{\partial\theta^2}-\,\frac{\partial }{\partial\theta}(\w(t,x,\theta;R)R). \label{diffeq}
\end{align}
\end{linenomath}
{\rev{The corresponding fluxes are $J_{\text{space}}(t,x,\theta)=-\D\nabla R + \v(t,x,\theta,R)R$ and $J_{\text{clock}}(t,x,\theta)=-\Dc\partial_{\theta}R+\w(t,x,\theta;R)R$. 
The first term in each flux is a diffusive flux in physical space and clock space with diffusion coefficients $\D$ and $\Dc$, respectively, assuming Fick's law. The term $\v(t,x,\theta,R)$ models the effective velocity of cells due to cell-cell adhesion. The term $\w(t,x,\theta,R)$ is the speed at which the clock phases advances.
The functional form for these terms are derived below.}}

The domain for (\ref{diffeq}) is
\[
      (x,\theta)\in [0,L]^2\times [0,2\pi]
\]
and we assume periodic boundary conditions, i.e. the density $R(t,x,\theta)$ is required to be $2\pi$-periodic in $\theta$
and $L-$periodic in $x$. Equivalently, we can think of $R(t,x,\theta)$ as function on the domain $[0,T)\times\Omega\times\TT$. Here $T>0$ and $\Omega={\mathbb{R}}^n/\sim$ 
is the quotient of  Cartesian space $\mathbb{R}^n$ by the equivalence relation that identifies the sides of a hypercube (square or cube in two or three dimensions) with each other, i.e. $(x,y)\sim(x+L,y)\sim (x,y+L)$. 
Similarly, $\TT=\mathbb{R}/2\pi\mathbb{Z}$ is a circle, namely the quotient $\mathbb{R}$ by the equivalence relation $\theta\sim\theta+2\pi$.

To set up the {\rev{of the clock velocity}} $\w(t,x,\theta,R)$, we assume that neighboring cells influence each others' cell clocks in a fashion similar to the Kuramoto model.
Specifically, we introduce a parameter $K$ below that describes and quantifies the strength of this influence. For $K>0$, neighboring cells seek to synchronize --
simply put, if my neighbor's clock is ahead of mine, my own clock speeds up a little. If $K<0$, this relation is reversed, and neighboring cells tend to {\em{anti-}}synchronize, i.e.
if my neighbor's clock is ahead of mine, my own clock tends to slow down.

Cells also move in the space. They undergo diffusion, and adhere to each other. {\rev{Adhesion is encoded by the effective spatial velocity $\v(t,x,\theta;R)$.}} The strength of the adhesion between neighbors depends on the absolute  difference $|\Delta\theta|$ of their clock phases. 
This is quantified by a second parameter $J$. For $J>0$, the adhesion strength is at a maximum at $|\Delta\theta|=0\, (\rm{mod } 2\pi)$  -- in other words ``like attracts like" and cells with the same phases adhere particularly strongly.
In the case $J<0$, the adhesion strength is at a {\em{minimum}} when $|\Delta\theta|=0$ and maximal when $|\Delta\theta|=\pi$, a situation best described by ``opposites attract." (See Table~\ref{tab1} for a summary of the meaning of the parameters $J$ and $K$. )

Specifically, these assumptions are modeled via the following functional terms for $\v$ and  $\w$:
 \begin{linenomath}
\begin{align}
\v(t,x,\theta;R)&=\int_{0}^{2\pi} \int_{D_{\rho}(0)}(J_0+J \cos(\theta-\thetat))\, \sigma(R(t,x+\xt,\thetat)\,) \frac{\xt}{|\xt|} d^n\xt\, d\thetat \label{v} \\ 
\w(t,x,\theta;R)&=\omega+K\, \int_{0}^{2\pi} \int_{D_{\rho}(0)}\sin(\thetat-\theta)\, R(t,x+\xt,\thetat)\, d^n\xt\, d\thetat \label{w}
\end{align}
 \end{linenomath}
Here $D_{\rho}(0)$ is a disk of radius $\rho>0$ centered at $0$ -- a template for the interaction neighborhood: Cells within a spatial distance of $\rho$ influence each others' phases and adhere to each other. For the function $ \sigma$ in the fluxes, one can typically choose a linear form or a saturating logistic
expression \cite{armstrongetal2006,glimmetal2014}:
 \begin{linenomath}
\begin{align}\label{sigma}
\sigma(R) = R \quad \text{or} \quad \sigma(R) = \frac{R_{\max}}{R_{\max} - 1} R \max\left(1 - \frac{R}{R_{\max}}, 0 \right) \text{ for some } R_{\max} > 0.
\end{align}
 \end{linenomath}
Here on the right, $R_{\max}$ is a maximum cell density. We will concentrate on the linear form $\sigma(R)=R$ in the analysis, but 
will use the logistic form for numerical simulations. The factor is chosen so that the two terms are equal for $R=1$. (The simulations
in section~\ref{section_numerics} use a constant value $R=1$ with small random noise for the initial conditions.)

The form of the adhesion velocity $\v$ follows Armstrong et al.'s model (\cite{armstrongetal2006}). 
The parameter $J_0$ is a ``baseline" adhesion strength. For $J=0$, the above equations are in fact identical to Armstrong et al's model. 
{\rev{Then the integral in (\ref{v}) is a a net direction obtained by averaging direction vectors over a neighborhood with the sensing radius $\rho$.
Directions are weighted by  cell density.
This encodes that cells are attracted by their neighbors and tend to move in their directions.}} Our addition is that the strength of attraction is modulated by the clock phase difference, described by the term $J\cos(\theta-\thetat)$, following a similar approach as 
in \cite{okeeffe2017}{\rev{, but with local attraction in a sensing radius $\rho$ as opposed to global attraction}}. The clock speed $\w(t,x,\theta,R)$ comes from the Kuramoto model \cite{kuramoto1984}. 

Note that by considering the temporal shift $\theta\to\theta-\omega t$, we can assume without loss of generality that $\omega=0$. 

There are also diffusive terms in $x$ and $\theta$. {\rev{The spatial diffusion coefficient is $\D$.}} Diffusion in {\rev{clock}} space
{\rev{encodes}} a small random component in the clock speed with diffusion coefficient $\Dc$. We will put particular emphasis on the case $\Dc=0$, which is the most natural assumption (no diffusion in clock-space). 
However, a small value of $\Dc$ is advantageous for numerical analysis because it prevents blowup of solutions. In any case, $\Dc$ should be ``small", in the sense that
the dimensionless quantity $\Dc\rho^2/\D$ satisfies
\[
\frac{\Dc \rho^2}{\D}<<1
\]
(See section \ref{phases_onset}.)
The equation (\ref{diffeq}) is complemented by an initial condition is
\[
 R(t=0,x,\theta)=R_0(x,\theta)
\]
where $R_0(x,\theta)$ is a nonnegative function.

{\rev{The model (\ref{diffeq}) is derived from assumptions about the net flux of the cell density, following the approach of the Armstrong cell-cell adhesion model \cite{armstrongetal2006}.
In \cite{buttenschonetal2018}, it was shown that the Armstrong model itself can be derived as the Master equation of a space-jump process, in which cells are explicitly modeled as individual jumpers in a discrete grid.
While it is outside the scope of this paper, we expect that a similar approach can be taken to derive our model as the Master equation of a space jump-process on a grid that includes
both spatial dimensions and a dimension that corresponds to the clock phase. The jump probabilities in clock space are governed by the Kuramoto model. In physical space, jump probabilities
would encode a bias towards other cells, but mediated with a clock-dependent term. In section~\ref{section_LGCA}, we set up a lattice-gas cellular automaton (LGCA) model which encodes the
same rules, alhough we do not prove formal equivalence to our PDE model  (\ref{diffeq}).}}

{\rev{The spatial modeling assumptions of our model (\ref{diffeq}) have similarities to the Vicsek model of swarming \cite{vicsek1995}. However, one key difference is
the interaction between particles: In the Vicsek model, particles tend to align their velocities, i.e. move in the directions of their neigbors {\it{motion.}}
In our model, and Armstrong's model of cell-cell adhesion, particles tend to move towards their neighbors, i.e. in the directions of  their neighbors {\it{location.}}
Correspondingly, in the Vicsek model, particles tend to swarm, i.e. travel together, whereas in the Armstrong model (and our model), cells tend to form stationary aggregates.}}

We note that while the model  (\ref{diffeq}) is defined for any values $J_0,J,K$, not all of them are meaningful from a modeling point of view.
The model hinges on the assumption that patterns are sufficiently coarse grained that a description using cell density 
is appropriate. Speficially, meaningful values of $J_0, J$ and $K$ are those that lead to patterns with a characteristic length scale not smaller than the cell interaction radius $\rho$, see 
Appendix~\ref{constr_j0jk} for further discussion.

\begin{table}
\begin{tabular}{ l | l | l}
Condition & Description & Shorthand\\
\hline
$J>0$ & max adhesion in-phase ($\Delta\theta=0$) & ``like attracts like"\\
$J<0$ & max adhesion out-of-phase ($\Delta\theta=\pm\pi$) & ``opposites attract"\\
$K>0$ & clock speeds up if behind neighbor's &   ``neighbors seek to synchronize"\\
$K<0$ & clock slows down if behind neighbor's &   ``neighbors seek to {\em{anti-}}synchronize"\\
\end{tabular}
\caption{Explanation of the parameters $J$ and $K$. Here $\Delta\theta$ is the difference of a cell's clock phase and its neighbor's phase.}\label{tab1}
\end{table}

\section{Nonnegativity, weak versions and steady state  solutions} \label{section_math}
It is useful to formulate weak versions of the equation (\ref{diffeq}). This enables us to state in a rigorous way that a certain family of Dirac-delta distributions are steady states.
First, let us emphasize that $R(t,x,\theta)$ is a density function with respect to spatial position $x$ and clock phase $\theta$. Not surprisingly, solutions remain nonnegative for nonnegative initial
conditions  $R_0(x,\theta)$. For positive clock diffusion $\Dc>0$, this follows from a maximum principle for {\rev{parabolic}} PDEs (\cite{evansbook}), but for completeness, we include a proof that covers the case $\Dc\geq 0$:

\begin{theorem} (Nonnegativity of solutions)
Let $R(t,x,\theta)\in C^2([0,T)\times\Omega\times\TT)$ be a strong solution of (\ref{diffeq}), where $T>0$ and $\Dc\geq 0$.
Suppose the initial condition $R(t=0,x,\theta)=R_0(x,\theta)$ is a nonnegative continuous function.
Then  $R(t,x,\theta)\geq 0$ for all $0\leq t<T$ and $x\in\Omega, \theta\in\TT$.
\end{theorem}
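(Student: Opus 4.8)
The plan is to prove nonnegativity by a barrier-function (perturbed minimum) argument, which is the standard way to handle weak/degenerate maximum principles and covers $\Dc\geq 0$ uniformly. Fix $T'<T$. For $\eps>0$ set $R_\eps(t,x,\theta)=R(t,x,\theta)+\eps e^{\lambda t}$ for a constant $\lambda>0$ to be chosen. If the theorem fails, then for small enough $\eps$ the function $R_\eps$ attains a negative minimum somewhere on the compact set $[0,T']\times\Omega\times\TT$ (the spatial domain is compact since $\Omega$ and $\TT$ are tori); since $R_\eps(0,\cdot,\cdot)=R_0+\eps>0$, the minimum occurs at some interior time $t_0\in(0,T']$, at a point $(x_0,\theta_0)$. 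At that point one has $\partial_t R_\eps\leq 0$, $\nabla R_\eps=0$, $\partial_\theta R_\eps=0$, and the Hessian in $(x,\theta)$ is positive semidefinite, so $\nabla^2 R_\eps\geq 0$ and $\partial_{\theta\theta}R_\eps\geq 0$. Since $R$ and $R_\eps$ differ by a function of $t$ alone, at $(t_0,x_0,\theta_0)$ we also have $\nabla R=0$, $\partial_\theta R=0$, $\nabla^2 R=\nabla^2 R_\eps\geq 0$, $\partial_{\theta\theta}R=\partial_{\theta\theta}R_\eps\geq 0$, while $\partial_t R=\partial_t R_\eps-\eps\lambda e^{\lambda t_0}$.

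Next I would substitute into the PDE at $(t_0,x_0,\theta_0)$. Expanding the divergence and the $\theta$-derivative by the product rule gives
\begin{align}
\partial_t R=\D\nabla^2 R-(\nabla\cdot\v)R-\v\cdot\nabla R+\Dc\,\partial_{\theta\theta}R-(\partial_\theta\w)R-\w\,\partial_\theta R. \nonumber
\end{align}
At the minimum point the terms $\v\cdot\nabla R$ and $\w\,\partial_\theta R$ vanish because $\nabla R=0$ and $\partial_\theta R=0$; the diffusion terms $\D\nabla^2 R$ and $\Dc\,\partial_{\theta\theta}R$ are $\geq 0$. So at that point
\begin{align}
\partial_t R\geq -\big((\nabla\cdot\v)+(\partial_\theta\w)\big)R. \nonumber
\end{align}
Here I use that $R(t_0,x_0,\theta_0)=R_\eps(t_0,x_0,\theta_0)-\eps e^{\lambda t_0}<0$. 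The key structural fact is that $\nabla\cdot\v$ and $\partial_\theta\w$ are \emph{bounded} on $[0,T']\times\Omega\times\TT$ by a constant $M$ depending only on $\|R\|_{C^1}$, $\rho$, $|J_0|$, $|J|$, $|K|$ and the chosen $\sigma$ (for the linear $\sigma$, $\nabla\cdot\v$ and $\partial_\theta\w$ are linear in $\nabla R$ and $R$ respectively, integrated over the bounded neighborhood $D_\rho(0)$; for the logistic $\sigma$ one uses that $\sigma$ is $C^1$ with bounded derivative on the relevant range). Choosing $\lambda>M$ then yields a contradiction: from $\partial_t R=\partial_t R_\eps-\eps\lambda e^{\lambda t_0}\leq -\eps\lambda e^{\lambda t_0}$ and $\partial_t R\geq -M R\geq -M\big(R_\eps-\eps e^{\lambda t_0}\big)$, and using $R_\eps<0$ at the minimum so $-M R_\eps>0$, we get $-\eps\lambda e^{\lambda t_0}\geq \partial_t R\geq -M R_\eps+M\eps e^{\lambda t_0}>M\eps e^{\lambda t_0}$, i.e. $-\eps\lambda e^{\lambda t_0}>M\eps e^{\lambda t_0}$, impossible for $\lambda,M\geq 0$. (If instead one wants to avoid the sign bookkeeping, note simply $\partial_t R_\eps=\partial_t R+\eps\lambda e^{\lambda t_0}\geq -M R+\eps\lambda e^{\lambda t_0}=-M R_\eps+(M+\lambda)\eps e^{\lambda t_0}>-MR_\eps\geq 0$, contradicting $\partial_t R_\eps\leq 0$ at the minimum.) Hence no negative minimum exists, so $R_\eps\geq 0$ for every $\eps>0$ on $[0,T']$; letting $\eps\to 0$ gives $R\geq 0$ on $[0,T']$, and since $T'<T$ was arbitrary, $R\geq 0$ on $[0,T)$.

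The main obstacle, and the only place any real care is needed, is establishing the boundedness of the nonlocal coefficients $\nabla\cdot\v$ and $\partial_\theta\w$ on $[0,T']\times\Omega\times\TT$ and confirming that differentiation under the integral sign in \eqref{v}--\eqref{w} is legitimate. Since $R\in C^2$ on a compact domain, $R$ and $\nabla R$ are bounded, the kernels $J_0+J\cos(\theta-\thetat)$, $\sin(\thetat-\theta)$ and $\x/|\x|$ are bounded, and the integration is over the fixed bounded set $D_\rho(0)\times[0,2\pi]$, so dominated convergence justifies passing $\nabla$ and $\partial_\theta$ under the integral, and the resulting expressions are continuous hence bounded on the compact set; this gives the constant $M$. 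Everything else is the routine perturbed-minimum bookkeeping sketched above. One remark: the argument does not use any positivity of $J_0$, $J$, $K$, nor any sign condition on $\nabla\cdot\v$ — it only uses boundedness — which is exactly why it works for all parameter values and for $\Dc\geq 0$ including the degenerate case $\Dc=0$.
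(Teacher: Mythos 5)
The overall strategy (additive barrier $R_\eps=R+\eps e^{\lambda t}$ with $\lambda$ dominating the divergence of the drift, contradiction at an extremum, then $\eps\to 0$) is the same as the paper's, but your key inequality has a sign error, and it occurs exactly at the point the paper's version of the argument is designed to avoid. At your negative minimum you have $R<0$ and $\partial_t R\geq -\big(\nabla\cdot\v+\partial_\theta\w\big)R$ with $|\nabla\cdot\v+\partial_\theta\w|\leq M$. You then conclude $\partial_t R\geq -MR$, which is the wrong direction: writing $c=\nabla\cdot\v+\partial_\theta\w$, the claim $-cR\geq -MR$ is equivalent to $(M-c)R\geq 0$, which fails whenever $c<M$ and $R<0$. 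The only valid lower bound is $-cR\geq MR$ (worst case $c=-M$), i.e. $\partial_t R\geq MR$, and then your chain of inequalities only gives $MR_\eps\leq -(\lambda-M)\eps e^{\lambda t_0}$, an upper bound on the value of the negative minimum rather than a contradiction; the parenthetical variant has the same flaw. The problem is structural, not just bookkeeping: with the additive barrier, the zeroth-order term $-(\nabla\cdot\v+\partial_\theta\w)R_\eps$ at a negative minimum scales with $|R_\eps|$, which is not controlled by $\eps e^{\lambda t}$, so no choice of $\lambda$ closes the argument at a general negative minimum.

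Two standard repairs, either of which fits your framework. (i) Argue, as the paper does, at the \emph{first} time $t^*$ at which $R_\eps$ reaches the value $0$ (it starts strictly positive): there $R_\eps=0$, $\nabla R_\eps=0$, $\partial_\theta R_\eps=0$, so when the equation is rewritten for $R_\eps$ the zeroth-order terms vanish identically, and the strictly positive forcing term $\eps e^{\lambda t}\big(\lambda+\nabla\cdot\v+\partial_\theta\w\big)>0$ (this is where $\lambda>M$ enters) yields $\partial_t R_\eps>0$, contradicting $\partial_t R_\eps\leq 0$ at a first touching point. (ii) Use a multiplicative barrier $S=e^{-\lambda t}R$, which satisfies the same equation with the zeroth-order coefficient shifted by $-\lambda$; at a negative minimum of $S$ one gets $0\geq \partial_t S\geq -\big(\lambda+\nabla\cdot\v+\partial_\theta\w\big)S>0$, since $\lambda+c>0$ and $S<0$, a genuine contradiction compatible with $\Dc\geq 0$. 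Your discussion of boundedness of the nonlocal coefficients (differentiation under the integral over the compact set $D_\rho(0)\times[0,2\pi]$, continuity on a compact domain) is correct and supplies exactly the constant $M$ needed in either repair.
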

\begin{proof}
Write $w_i(t,x,\theta)=v_i(t,x,\theta;R(t,x,\theta))$ for $i=1,2$ For $0<T_1<T$, let 
\[
\lambda>\max_{0\leq t\leq T_1,x\in\Omega,t\in\TT}(|\nabla\cdot w_1|+|\nabla\cdot w_2|)
\]
Consider for $\eps>0$ the function
\[
R_\eps(t,x,\theta)=R(t,x,\theta)+\eps\cdot e^{\lambda t}.
\]
Then 
 \begin{linenomath}
\begin{align*}
\frac{\partial R_\eps}{\partial t}&=\D\nabla^2R_\eps+\Dc\frac{\partial^2R_\eps}{\partial\theta^2}-\nabla\cdot(w_1\, R_\eps)-\frac{\partial}{\partial\theta}(w_2\, R_\eps)+\eps e^{\lambda t}(\nabla\cdot w_1+\nabla\cdot w_2+\lambda)\\
                              &>\D\nabla^2R_\eps+\Dc\frac{\partial^2R_\eps}{\partial\theta^2}-\nabla\cdot(w_1\, R_\eps)-\frac{\partial}{\partial\theta}(w_2\, R_\eps)
\end{align*}
 \end{linenomath}
The function $R_\eps(t,x,\theta)>0$ cannot take on nonpositive values for $\eps>0$. Indeed, suppose that $R_\eps$ attains the value $0$ for the first time at time $t=t^*$ at a point $x=x^*$ and clock value $\theta=\theta^*$.
Then $\frac{\partial R_{\eps}} {\partial t}(t^*,x^*,\theta^*)\leq 0$. 
Also, we have that at those values $\nabla R=0$ and $\frac{\partial R}{\partial \theta}=0$, and, because it is a {\rev{minimum}}, $\nabla^2R_\eps\geq 0,$ and $\frac{\partial^2 R}{\partial \theta^2}\geq 0$.
But this gives 
\[
\frac{\partial R_\eps}{\partial t}(t^*,x^*,\theta*)\geq 0,
\]
by the above estimate, a contradiction.  The positivity of $R_\eps$ for all $\eps>0$ immediately implies that $R(t,x,\theta)\geq 0$ for all $t,x,\theta$.
\end{proof}

In keeping with the standard approach (\cite{perthamebook}), we can formulate a weak version of the problem: Say a function $R\in L^{\infty}([0,T)\times \Omega\times\TT)$ is a solution of (\ref{diffeq})
in the distribution sense if for all test functions $\varphi\in C^{\infty}_c ([0,T)\times\Omega\times\TT)$, we have
 \begin{linenomath}
\begin{align*}
  -\int_0^T\int_\Omega\int_0^{2\pi} R(t,x,\theta)& \left(\frac{\partial\varphi}{\partial t}+\v(t,x,\theta;R) \cdot\nabla\varphi+\D\nabla^2\varphi+\w(t,x,\theta;R)\frac{\partial\varphi}{\partial\theta}+\Dc\frac{\partial^2\varphi}{\partial\theta^2}\right)\,d\theta\, d^nx\,dt\\
 &=\int_{\Omega}\int_0^{2\pi}\varphi(t=0,x,\theta)\, R_0(x,\theta)\, d\theta\, d^nx,
\end{align*}
 \end{linenomath}
where $\v(t,x,\theta;R)$ and $\w(t,x,\theta;R)$ are given by (\ref{v}) and (\ref{w}), respectively.

For an even more general concept of a solution that permits for Dirac-delta-like distributions in the clock variable $\theta$, we adapt the approach of Hillen et al. \cite{hillenetal2010}. Let ${\mathcal{B}}(\TT)$ be the space of regular signed real-valued Borel measures on $\TT=\mathbb{R}/2\pi\mathbb{Z}$.
This is a Banach space if the absolute variation is taken as a norm, see \cite{hillenetal2010} for details.
Let ${\mathbb{X}}=L^{\infty}(\Omega,\mathcal{B}(\TT))$. So if $R\in\mathbb{X}$ and $x\in\Omega$, then $R(x)$ is a measure on $\TT$. Say $R\in\mathbb{X}$ is a weak steady state if the following equality holds
for all test functions $\varphi\in C^{\infty}(\Omega\times\TT)$:
 \begin{linenomath}
\begin{align}\label{defweakss}
0=\int_\Omega\int_{\TT} \left( \v(x,\theta,x;R)\cdot \nabla\varphi  +\w(x,\theta;R) \frac{\partial{\varphi}}{\partial\theta}+\D\nabla^2\varphi+\Dc\frac{\partial^2\varphi}{\partial\theta^2}   \right)\, R(x,d\theta) \, d^nx
\end{align}
 \end{linenomath}
Here $R(x,d\theta)$ denotes integration with respect to the measure $R(x)$ and
 \begin{linenomath}
\begin{align}
\v(x,\theta;R)&= \int_{D_{\rho}(0)} \int_{\TT}(J_0+J\,\cos(\theta-\thetat)\, R(x+\xt,d\thetat)\,\frac{\xt}{|\xt|} d^n\xt \\ 
\w(x,\theta;R)&=K\, \int_{D_{\rho}(0)}\int_\TT \sin(\thetat-\theta)\, R(x+\xt,d\thetat)\, d^n\xt
\end{align}
 \end{linenomath}
(These $\v,\w$ are Borel-measurable functions on $\Omega\times\TT$.)
With these definitions, we can now state that we have two classes of steady states.
\begin{theorem}
\begin{enumerate}
\item For any constant $\Rb>0$, the constant function $R\in C^{\infty}~(\Omega~\times~\TT)$
\[
R(x,\theta)=\Rb=const
\]
is a strong steady state solution. We refer to it as the {\em{incoherent state.}}

\item \label{dirac} Suppose $\Dc=0$. Let $\theta_0\in[0,2\pi)$ be any clock value. Suppose $R_0\in C^2(\Omega)$ is a strong solution of 
 \begin{linenomath}
\begin{align*}
\D \nabla^2R_0&- \nabla\cdot(w(x;R_0)R_0)=0\\ 
\quad & {\text{ with }} w(x;R_0)=\left(J_0+J\right)\int_{D_{\rho}(0)} R_0(x+\xt)\frac{\xt}{|\xt|} d^n\xt.
\end{align*}
 \end{linenomath}
Then the function $R\in\mathbb{X}$
\[
  R(x)=R_0(x)\, \delta(\theta-\theta_0)
\]
is a weak steady state. We refer to it as a {\em{(perfectly) synchronized solution}}.
\end{enumerate}
\end{theorem}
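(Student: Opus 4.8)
\emph{Part (1).} The plan is to substitute the constant function $R\equiv\Rb$ directly into the weak steady-state identity \eqref{defweakss}. First I would observe that for the constant density, the clock velocity $\w(x,\theta;\Rb)=K\Rb\int_{D_\rho(0)}\int_\TT\sin(\thetat-\theta)\,d\thetat\,d^n\xt$ vanishes because $\int_0^{2\pi}\sin(\thetat-\theta)\,d\thetat=0$. Similarly, the spatial velocity $\v(x,\theta;\Rb)=\Rb\int_{D_\rho(0)}\int_\TT(J_0+J\cos(\theta-\thetat))\,d\thetat\,\frac{\xt}{|\xt|}\,d^n\xt$ vanishes: the $J_0$ piece gives a constant times $\int_{D_\rho(0)}\frac{\xt}{|\xt|}\,d^n\xt=0$ by the odd symmetry of the integrand over the symmetric domain $D_\rho(0)$, and the $J$ piece gives $\int_\TT\cos(\theta-\thetat)\,d\thetat=0$. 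Hence every advective contribution in \eqref{defweakss} is identically zero. What remains is $\Rb\int_\Omega\int_\TT(\D\nabla^2\varphi+\Dc\partial_\theta^2\varphi)\,d\theta\,d^nx$, which vanishes for every $\varphi\in C^\infty(\Omega\times\TT)$ because $\Omega$ and $\TT$ are closed manifolds (periodic boundary conditions), so the integral of a pure Laplacian / second derivative over the whole period is zero by periodicity. Since this also verifies the strong form of \eqref{diffeq} pointwise (all terms are zero), $R\equiv\Rb$ is a strong steady state. This part is essentially a symmetry computation with no obstacle.

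\emph{Part (2).} Here I would plug $R(x)=R_0(x)\,\delta(\theta-\theta_0)$ into \eqref{defweakss} with $\Dc=0$ and reduce the $\theta$-integrals against the Dirac measure. The key point is that the inner integrals defining $\v$ and $\w$ now collapse: $\int_\TT R_0(x+\xt)\delta(\thetat-\theta_0)\,d\thetat=R_0(x+\xt)$, so
\begin{align*}
\v(x,\theta;R)&=\int_{D_\rho(0)}(J_0+J\cos(\theta-\theta_0))\,R_0(x+\xt)\,\frac{\xt}{|\xt|}\,d^n\xt,\\
\w(x,\theta;R)&=K\int_{D_\rho(0)}\sin(\theta_0-\theta)\,R_0(x+\xt)\,d^n\xt.
\end{align*}
Then in \eqref{defweakss} the outer $\theta$-integral against $R(x,d\theta)=R_0(x)\delta(\theta-\theta_0)\,d\theta$ simply evaluates the test-function derivatives at $\theta=\theta_0$: note $\cos(\theta-\theta_0)|_{\theta=\theta_0}=1$ and $\sin(\theta_0-\theta)|_{\theta=\theta_0}=0$, so the clock-advection term $\w\,\partial_\theta\varphi$ contributes nothing, the spatial velocity at $\theta=\theta_0$ becomes exactly $w(x;R_0)=(J_0+J)\int_{D_\rho(0)}R_0(x+\xt)\frac{\xt}{|\xt|}\,d^n\xt$, and $\Dc\partial_\theta^2\varphi$ is dropped since $\Dc=0$. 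The identity \eqref{defweakss} therefore reduces to
\[
0=\int_\Omega R_0(x)\Bigl(w(x;R_0)\cdot\nabla\varphi(x,\theta_0)+\D\,\nabla^2\varphi(x,\theta_0)\Bigr)\,d^nx.
\]
I would finish by integrating by parts in $x$ over the closed manifold $\Omega$ (no boundary terms): this turns the right-hand side into $\int_\Omega\varphi(x,\theta_0)\bigl(\D\nabla^2R_0-\nabla\cdot(w(x;R_0)R_0)\bigr)\,d^nx$, which is zero by the hypothesis that $R_0$ solves the stated stationary equation. Since $\varphi$ was arbitrary, $R(x)=R_0(x)\delta(\theta-\theta_0)$ is a weak steady state.

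\textbf{Main obstacle.} The computations are routine; the only points needing care are (i) justifying that the Dirac measure may be substituted into the nonlocal velocity functionals $\v,\w$ — i.e. checking that the resulting $\v(\cdot;R),\w(\cdot;R)$ are genuinely Borel-measurable and locally bounded functions on $\Omega\times\TT$ so that all integrals in \eqref{defweakss} make sense, which follows from $R_0\in C^2(\Omega)$ and continuity of the kernels; and (ii) being careful that the integration by parts in $x$ is legitimate with $R_0$ only assumed $C^2$ and $\varphi$ smooth, which is immediate on the compact manifold $\Omega$. A secondary subtlety worth a sentence is that $\theta\mapsto\varphi(x,\theta)$ restricted to the support point $\theta_0$ is well-defined precisely because $\varphi$ is continuous, so evaluating derivatives of $\varphi$ at $\theta_0$ against $\delta(\theta-\theta_0)$ is unambiguous.
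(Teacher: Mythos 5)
Your proposal is correct and follows essentially the same route as the paper: substitute the candidate states into the (weak) formulation, use the symmetry/vanishing of the advective kernels (for the constant state) and the collapse of the nonlocal integrals against the Dirac measure at $\theta=\theta_0$ (for the synchronized state), and then verify the identity via integration by parts over the closed domain using the stationary equation for $R_0$. Your integration-by-parts step simply makes explicit what the paper dismisses as "easy to verify," so no further comment is needed.
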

Here $\mu=\delta(\theta-\theta_0)$ denotes the point mass (Dirac) measure on $\TT$ concentrated at $\theta=\theta_0$, i.e. for any Borel set $B\subseteq\TT$, we have $\mu(B)=0$ if $\theta_0\notin B$
and  $\mu(B)=1$ if $\theta_0\in B.$
\begin{proof}
This is a straightforward computation. For (\ref{dirac}), note that
\[
\v(x,\theta;R)=\left(J_0+J\cos(\theta-\theta_0)\right) \int_{D_{\rho}(0)} R_0(x+\xt) \frac{\xt}{|\xt|} d^n\xt
\]
and 
\[
\w(x,\theta;R)=K \sin(\theta-\theta_0)\,\int_\Omega R_0(x)\, d^nx.
\]
Using these expressions, it is easy to verify that (\ref{defweakss}) holds.
\end{proof}

\section{Analysis of Phases} \label{section_linstab}
To analyze the behavior of equation (\ref{diffeq}) in dependence of the parameters $K$, $J$ and $J_0$, we perform a linear stability analysis of the incoherent steady state $R(x,\theta)=\Rb=const$.
The solutions of the linearized system give indications about the possible types of far-from-equilibrium patterns that can form. We use this to classify the emerging patterns (`pattern onsets'). 
We then compare these linearized patterns with numerical solutions of the original equation in Section~\ref{section_numerics}. 
\subsection{Linear Stability Analysis}\label{linstab}
For the linear stability analysis of the incoherent steady state $R(x,\theta)=\Rb=const$, we seek solution of the form 
\[
  R(t,x,\theta)=\Rb+c(t)e^{i(q\theta+k\cdot x)},
\]
where $q$ is the wave number of the clock phase $\theta$ and $k$ is the wave vector of the spatial coordinate $x$. The $2\pi-$periodicity of $R(t,\theta,x) $ in $\theta$ dictates that $q\in\mathbb{Z}$ takes on integer values. 
{\rev{Similarly, the periodic boundary conditions in space mean that only spatial wavevectors $k$ are considered for which $\frac{L}{2\pi}k$ takes on integer values.}} Solutions with temporally increasing coefficients $c(t)$ correspond to emerging patterns in space and in clock synchronization. For $|q|>0$, clocks synchronize; for nonzero $k$, spatial patterns form.

Utilizing this form in the model (\ref{diffeq}) and neglecting terms of order $c(t)^2$ and higher yields first-order equations for $c(t)$. The following Proposition summarizes the result of these computations in one and two spatial dimensions in dependence of the clock wave number $q$; see the appendix~\ref{appendix_lin_stab} for details on the derivation:
\begin{proposition}\label{linearization}
Let $d=1$ or $2$ denote the spatial dimension.
\begin{enumerate}
\item For $q=0$, the linearization is \label{q0}
\[
    \frac{d c}{dt}=\frac{\D}{\rho^2}\sigma_1(|\rho k|)\, c(t)  
\]
with $\sigma_1(|\rho k|)=-|\rho k|^2+2 \frac{J_0\Rb\rho^{d+1}}{\D}\,  F_d(|\rho k|)$; see below for the definition of the function $F_d$.
\item For $|q|\neq 0, 1$, the  linearization is \label{qn01}
\[
        \frac{d c}{dt}=(-|k|^2\D-q^2\Dc)\, c(t)
\]
\item For $|q|=1$, the  linearization is \label{q1}
\[
        \frac{d c}{dt}=\frac{\D}{\rho^2}\sigma_3(|\rho k|)\, c(t).
\]
with $\sigma_3(\rho k)=-|\rho k|^2 -\frac{\rho^2\Dc}{\D}+ \frac{J\Rb\rho^{d+1}}{\D}  F_d(|\rho k|) + \frac{\Rb K\rho^{d+2}}{\D} G_d(|\rho k|) $; see below for the definitions of $F_d$ and $G_d$.
\end{enumerate}
Here for each possible spatial dimension $d=1,2$, the pair $F_d(k^*)$ and $G_d(k^*)$ are two explicit functions defined via certain integrals. In one spatial dimension, they have the form  
\[
F_1(k^*)=2\pi(1-\cos(k^*)), \quad G_1(k^*)=2\pi\frac{\sin(k^*)}{k^*}.
\]
In two spatial dimensions, the functions $F_2$ and $G_2$ can be expressed explicitly via Bessel and Struve functions;
see the Appendix~\ref{appendix_lin_stab} for their explicit functional forms.
\end{proposition}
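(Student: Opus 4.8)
The plan is to substitute the ansatz $R(t,x,\theta)=\Rb+c(t)e^{i(q\theta+k\cdot x)}$ directly into \eqref{diffeq}, expand each term, and collect the contributions that are linear in $c(t)$; terms of order $c(t)^2$ are discarded since both $\v$ and $\w$ are linear in $R$ (with $\sigma(R)=R$) and hence produce a product of two $c(t)$'s when applied to the perturbation. First I would note that the constant part $\Rb$ contributes nothing to the left-hand side and nothing to the diffusive terms, and that $\v(t,x,\theta;\Rb)$ and $\w(t,x,\theta;\Rb)-\omega$ vanish by symmetry (the integrals of $\xt/|\xt|$ and of $\sin$ over the symmetric neighborhood $D_\rho(0)\times\TT$ are zero), so the only surviving zeroth-order piece is already handled by Theorem on steady states. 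Thus to linear order the advection terms become $-\nabla\cdot(\v[\text{pert}]\,\Rb)-\partial_\theta(\w[\text{pert}]\,\Rb)$, where $\v[\text{pert}]$ and $\w[\text{pert}]$ are the velocities evaluated on the perturbation $c(t)e^{i(q\theta+k\cdot x)}$ alone.

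The core computation is evaluating these nonlocal velocity integrals on a single Fourier mode. For $\v$, I would write $\cos(\theta-\thetat)=\tfrac12(e^{i(\theta-\thetat)}+e^{-i(\theta-\thetat)})$ and integrate $e^{i q\thetat}$ against $e^{\pm i\thetat}$ over $\TT$: this is nonzero only when $q=\mp 1$, which is exactly why the case $|q|=1$ is special and why $J_0$ (the $q$-independent part of the kernel, paired with $\int_\TT e^{iq\thetat}\,d\thetat$) only survives when $q=0$. The remaining spatial integral $\int_{D_\rho(0)} e^{ik\cdot\xt}\frac{\xt}{|\xt|}\,d^n\xt$ is, after taking the divergence $-\nabla\cdot$ (which brings down a factor $ik$ acting on $e^{ik\cdot x}$), what produces the function $F_d(|\rho k|)$ up to the scaling factors of $\rho$; in $d=1$ it reduces to $\int_{-\rho}^{\rho}\sgn(\xt)e^{ik\xt}d\xt$, whose $ik\cdot$ contraction gives $2(1-\cos(\rho k))$, matching $F_1$ after extracting $\rho^{d+1}$ and the angular $2\pi$ factor from the $\theta$-integral. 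Similarly, for $\w$ I would write $\sin(\thetat-\theta)=\tfrac1{2i}(e^{i(\thetat-\theta)}-e^{-i(\thetat-\theta)})$; again only $|q|=1$ survives, and the spatial integral $\int_{D_\rho(0)}e^{ik\cdot\xt}\,d^n\xt$ (no $\xt/|\xt|$ factor this time, and no extra $ik$ from a divergence since $\w$ enters through $\partial_\theta$, which just multiplies by $iq=\pm i$) yields $G_d(|\rho k|)$; in $d=1$ this is $\int_{-\rho}^{\rho}e^{ik\xt}d\xt=\frac{2\sin(\rho k)}{k}$, giving $G_1$ after pulling out the $\rho^{d+2}$ and $2\pi$ normalizations. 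Collecting: the diffusive terms give $-|k|^2\D-q^2\Dc$, and adding the advective contributions yields exactly $\sigma_1$, $\sigma_3$, or (when $q\neq 0,\pm1$, so both nonlocal integrals vanish) just the pure diffusion rate, as claimed. For $d=2$ one uses polar coordinates and the Jacobi--Anger expansion $e^{ik\cdot\xt}=\sum_m i^m J_m(|k||\xt|)e^{im\phi}$; only the $m=0$ term survives the angular integral for $\w$ and only $m=\pm1$ for $\v$, and the resulting radial integrals $\int_0^\rho J_0(|k|r)r\,dr$ and $\int_0^\rho J_1(|k|r)\,dr$ are the standard integrals expressible through Bessel and Struve functions, which I would relegate to the appendix.

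The main obstacle is purely bookkeeping rather than conceptual: keeping the normalization factors consistent so that the claimed prefactors $\tfrac{J_0\Rb\rho^{d+1}}{\D}$, $\tfrac{J\Rb\rho^{d+1}}{\D}$ and $\tfrac{\Rb K\rho^{d+2}}{\D}$ come out correctly after one factors $\tfrac{\D}{\rho^2}$ out front and writes everything in terms of the dimensionless argument $|\rho k|$ — the difference in the power of $\rho$ between the $J$-term and the $K$-term ($\rho^{d+1}$ versus $\rho^{d+2}$) is precisely the extra $\xt/|\xt|$ versus plain $\xt$-free kernel together with the divergence contributing a $k$ rather than the $\theta$-derivative contributing a dimensionless $q$, so I would be careful to track that. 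The $d=2$ Bessel/Struve identities are the only genuinely nontrivial special-function computation and are deferred. A secondary care point is confirming that a real perturbation is recovered by pairing the mode with its complex conjugate, so that the real growth rates $\sigma_1,\sigma_3$ (which depend only on $|\rho k|$, not on the direction of $k$) are the physically meaningful quantities; this is immediate once one observes the linearized operator has real coefficients and the rates are even in $k$.
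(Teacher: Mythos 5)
Your proposal is correct and follows essentially the same route as the paper's appendix: substitute the single Fourier mode, use orthogonality of $e^{iq\tilde{\theta}}$ against the $\cos$/$\sin$ kernels to show only $q=0$ (for the $J_0$ piece) and $q=\pm1$ (for the $J$ and $K$ pieces) survive, evaluate the nonlocal spatial integrals over $D_\rho(0)$ in polar coordinates (the paper does this directly and hands the resulting 2D integrals to Mathematica rather than invoking Jacobi--Anger, but that is the same computation), and collect the $O(c)$ terms to read off $\sigma_1$, $\sigma_3$ and the pure-diffusion rate. One small slip in your 2D bookkeeping: the radial integral for the adhesion term carries the area-element weight, i.e. it is $\int_0^\rho \mathcal{J}_1(|k|r)\,r\,dr$ (this is what produces the Struve functions in $F_2$), not $\int_0^\rho \mathcal{J}_1(|k|r)\,dr$, which would be elementary.
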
 
The formula for $F_1$ is the same as in the clockless adhesion model of Armstrong et al. \cite{armstrongetal2006} and was given
there. Plots of the graphs of $F_2$ and $G_2$ are given in Figure~\ref{figFG}.

Note that the temporal eigenvalue in case \ref{qn01} is always negative. Therefore, in the linear stability analysis, only two cases of dependence on the clock phase $\theta$
need to be considered: Either uniform dependence (wavenumber $q=0$ in $\theta$-space) or waves with one mode ($q=\pm 1$). 

For the other two cases in Proposition~\ref{linearization}, there is a maximum temporal growth rate which corresponds to a certain mode in $\theta-$ and $x-$space. For $q=0$ (case \ref{q0}),
we denote the maximum growth rate by $\lambda_1$ 
For $q=\pm 1$ (case \ref{q1}), it is $\lambda_2$, the maximum (up to a constant factor) of $k^*\mapsto\sigma_3(k^*)$:
 \begin{linenomath}
\begin{align*}
  \lambda_1&=\max_{k^*> 0}\frac{\D}{\rho^2}\sigma_1(k^*)\\
  \lambda_2&=\max_{k^*\geq 0}\frac{\D}{\rho^2}\sigma_3(k^*)
\end{align*}
 \end{linenomath}
{\rev{Here the maxima are taken over the set of admissable  wavevectors $k^*=|\rho k|$, i.e. those that respect the periodic boundary conditions in space. Conservation of total cell mass forces the temporal growth rate of the mode $q=0, k^*=0$ to be zero, so we exclude $k^*=0$  from the definition of $\lambda_1$.}}
In addition, we denote by $k_2$ the value where the maximum of $\sigma_3$ is attained\footnote{We assume the smallest possible value of $k_2$ if the maximum is attained at multiple points.}:
\[
  k_2={\rm{argmax}}_{k^*\geq 0}\sigma_3(k^*)
\]

\begin{figure}
\includegraphics[width=0.9\textwidth]{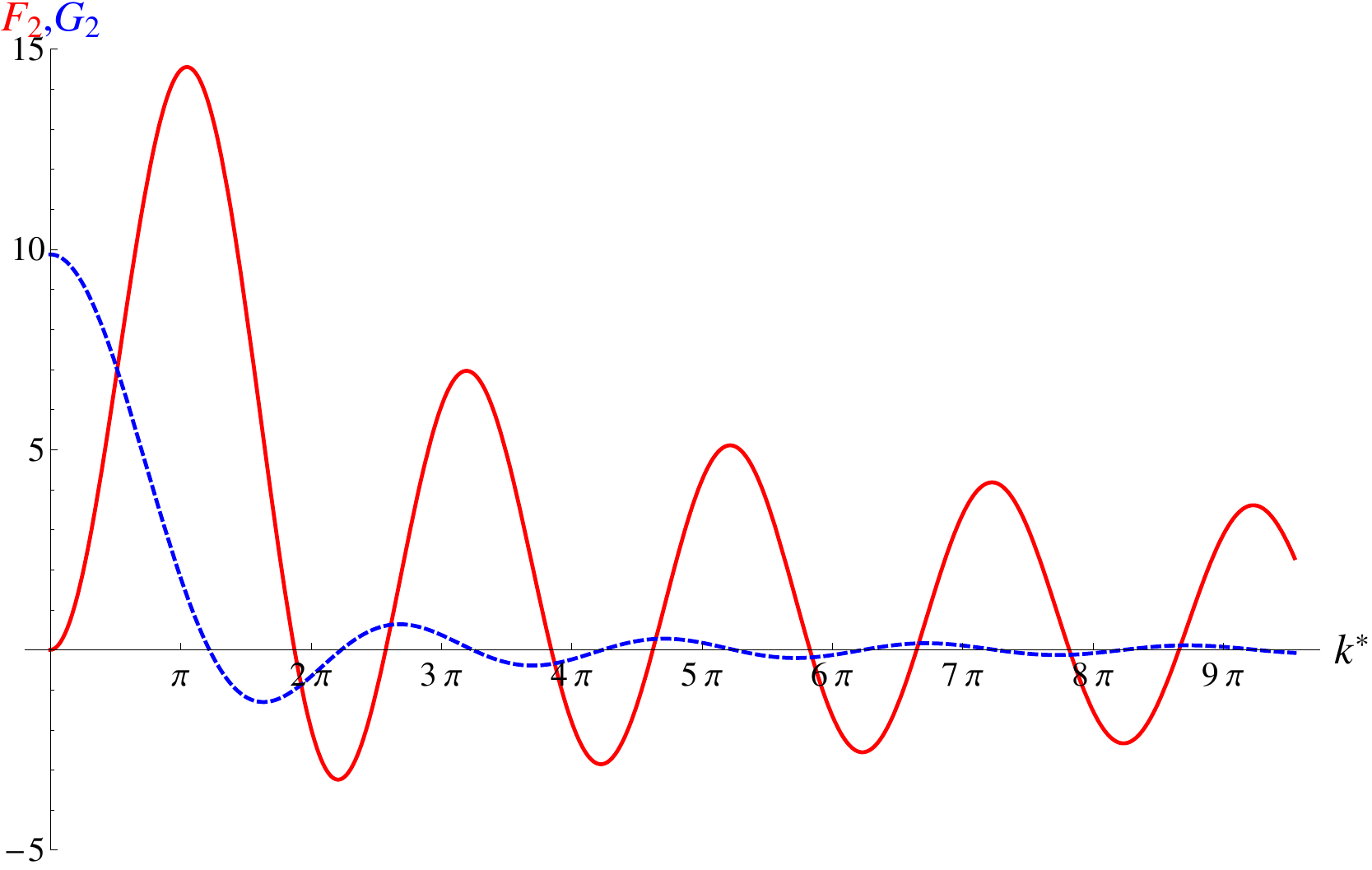}
\caption{Graphs of the functions $F_2(k^*)$ (red, solid) and $G_2(k^*)$ (blue, dashed); see Proposition~\ref{linearization}.}\label{figFG}
\end{figure}

\subsection{Classification of Pattern Onsets}
By the previous analysis, linearized solutions $R(t,\theta,x)$ are sums of the functions of the form $e^{\lambda t}e^{ikx}$ and  $e^{\lambda t}e^{\pm i\theta+ik\cdot x}$ with temporal growth rate $\lambda$ and spatial wavevector $k$.
This indicates the onset of certain types of possible patterns. Steady state patterns can be categorized {\rev{by  synchronization and spatial aggregation:
There may be no spatial aggregation ("uniform" pattern) or spatial aggregation along with no synchronization ("incoherent"), overall ("global") synchronization or local synchronization. }}
{\rev{Our terminology is summarized in Table~\ref{tab_terminology}. 
Accordingly, we classify the linearized solutions into six groups - uniform incoherent (UI),
uniform globally synchronized (UGS), unifom locally synchronized (ULS), aggregated incoherent (AI), aggregated locally synchronized (ALS) and aggregated globally synchronized (AGS).}}

{\rev{To carry out the classification}}, consider
linearized solutions of the form
 \begin{linenomath}
\begin{align*}
	R&(t,x,\theta)=\bar{R}+\Delta R(t,x,\theta)\text{, where}\\
           &\Delta R(t,x,\theta)=C_1e^{\lambda_1 t}\exp(ik_1\cdot x)+e^{\lambda_2 t}(C_{2,1}\exp(i\theta+ik_2\cdot x)+C_{2,2}\exp(-i\theta+ik_2\cdot x)).
\end{align*}
 \end{linenomath}
Here $\lambda_1$ and $\lambda_2$ are temporal growth rates of the corresponding modes. We assume that these modes are the fastest growing, i.e. that $\lambda_1$ is the maximum growth rate for the modes with $q=0$ 
(case \ref{q0} in Proposition~\ref{linearization}) with corresponding spatial wavevector $k_1\neq 0$, and $\lambda_2$ is the maximum growth rate for $q=\pm 1$ (case \ref{q1} in Proposition~\ref{linearization})  with corresponding $k_2$.
If the growth rate $\lambda_i$ is positive, the corresponding mode will grow in time; for negative $\lambda_i$, it will decay. This gives a classification of pattern onsets: Each combination of growth rates gives another category; for $\lambda_2>0$,
we also need to distinguish the cases $k_2=0$ [no spatial pattern, i.e. spatially uniform case] and $k_2\neq 0$. 
Considering only terms with positive temporal growth rate,  we then obtain the classification summarized in Table~\ref{tabonsets} and Figure~\ref{figonsets}. Here we also list for each case an example for the functional
form of the linearized pattern $\Delta R(x,\theta)$, mostly for the purpose of plotting an example for each pattern onset (Figure~\ref{figonsets}).

\begin{table}
\begin{tabularx}{\linewidth}{ |l | l | L| L|}
\hline
 Term & Abbr. & Description &  density $R(x,\theta)=\bar{R}+ \Delta R(x,\theta)$\\
\hline
\hline
(spatially) uniform & U & no spatial aggregation  & total spatial cell density $\int_0^{2\pi} R(x,\theta)d\theta$ is constant in $x$     \\
\hline
 aggregated & A& onset of spatial aggregation &  total spatial cell density $\int_0^{2\pi} R(x,\theta)d\theta$  varies with location $x$ \\
\hline
incoherent  & I& no synchronization & $R(t,x,\theta)$ is constant in $\theta$ for fixed $x$  \\
\hline
globally synchronized & GS &  onset of overall synchronicity &  $\Delta R(t,x,\theta)$ is a sine wave in $\theta$ with the same maximum  for all $x$\\
\hline
locally synchronized  & LS& onset of synchronicity within spatial clusters with distinct clock phases & $\Delta R(t,x,\theta)$ is a sine wave in $\theta$  for fixed $x$  with a maximum that varies by $x$\\
\hline
\end{tabularx}
\caption{{\rev{Terminology of pattern onsets. See Figure~\ref{figonsets} for graphical representations in one spatial dimension.
The same concepts are also illustrated with two-dimensional examples of a closely related discrete model in Figure~\ref{fig_lgca_results}.}} }\label{tab_terminology}
\end{table}

\begin{table}
\begin{tabular}{c | c |c ||c|c}  
$\lambda_1$ & $\lambda_2$ & $k_2$ & example $\Delta R(x,\theta)$ &  Name\\
\hline
$\lambda_1<0$  & $\lambda_2<0$ && $0$ &  {\bf{uniform incoherent (UI)}}\\
$\lambda_1>0$  & $\lambda_2<0$ && $-\cos(k_1x)$ &  {\bf{aggregated incoherent (AI)}}\\
$\lambda_1<0$  & $\lambda_2>0$ & $k_2=0$& $-\cos(\theta)$ &  {\bf{unif. globally synchronized (UGS)}}\\
$\lambda_1<0$  & $\lambda_2>0$ & $k_2\neq0$& $-\cos(\theta)\cos(k_2x)$ &  {\bf{unif. locally synchronized (ULS)}}\\
$\lambda_1>0$  & $\lambda_2>0$ & $k_2=0$& $-\cos(k_1 x)+\cos(\theta)$ &  {\bf{aggr. globally synchronized (AGS)}}\\
$\lambda_1>0$  & $\lambda_2>0$ & $k_2\neq0$& $-\cos(k_1x)-\cos(\theta)\cos(k_2x)$ &  {\bf{aggr. locally synchronized (ALS)}}\\
\end{tabular}
\caption{Classification of linearized solutions (pattern onsets); see Figure~\ref{figonsets}}\label{tabonsets}
\end{table}

\begin{figure}
\includegraphics[width=0.3\textwidth]{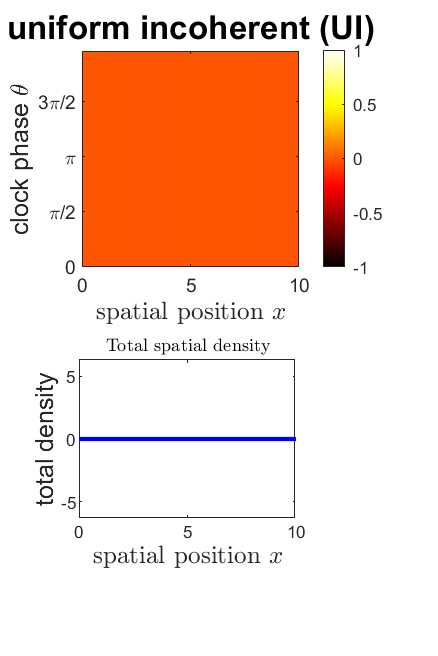}
\includegraphics[width=0.3\textwidth]{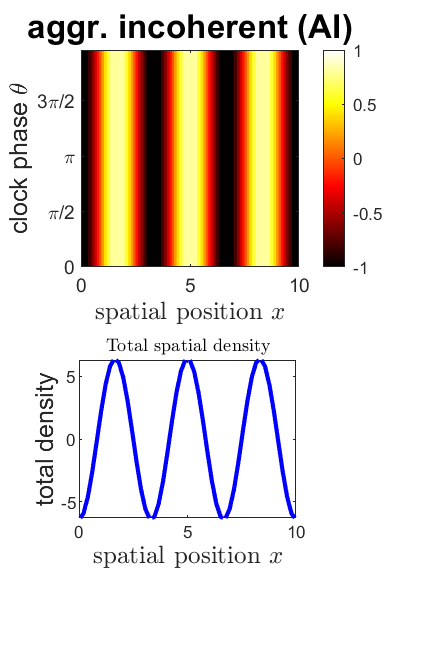}
\includegraphics[width=0.3\textwidth]{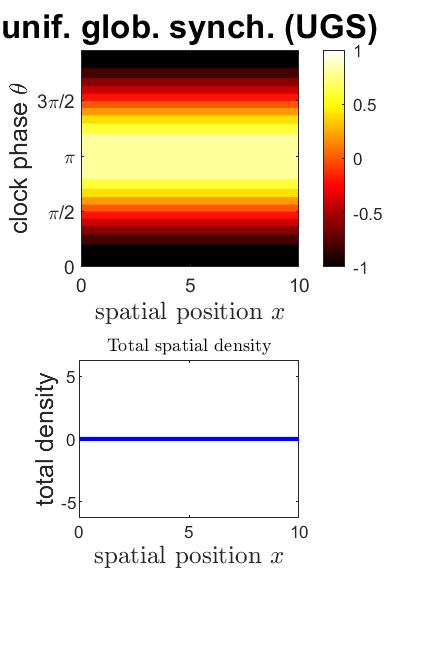}\\
\includegraphics[width=0.3\textwidth]{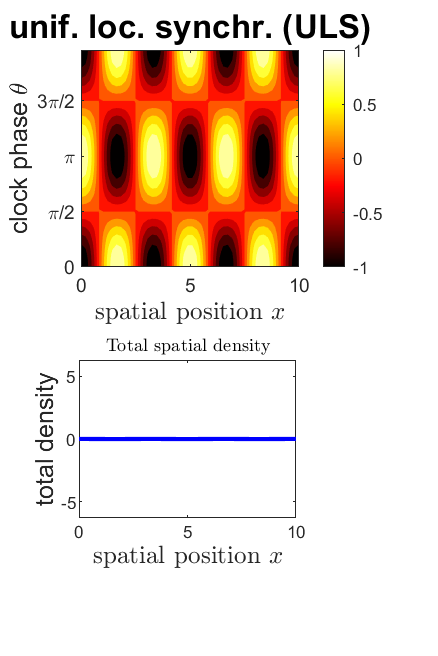}
\includegraphics[width=0.3\textwidth]{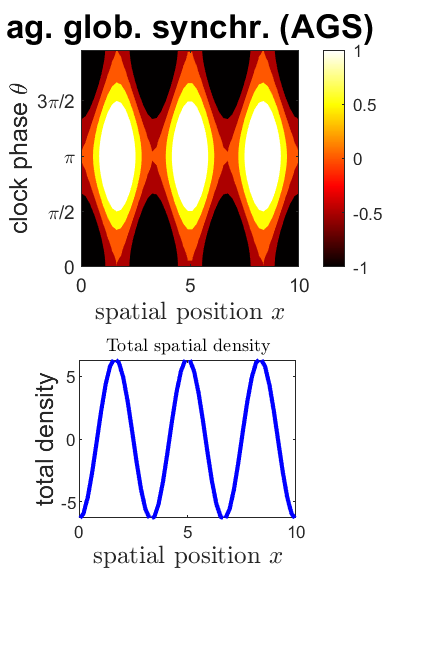}
\includegraphics[width=0.3\textwidth]{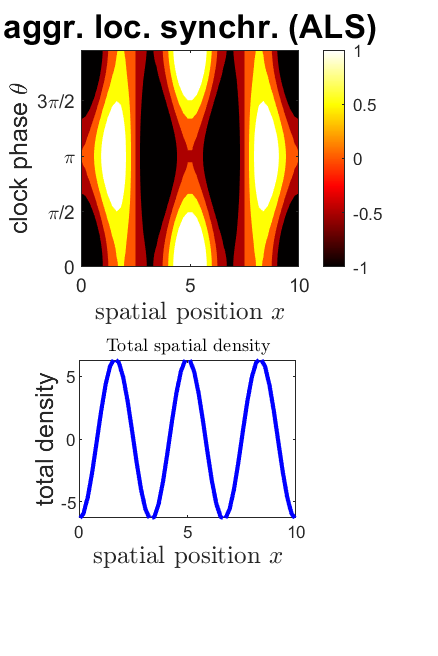}
\caption{Graphical representations of the six pattern onsets listed in Table~\ref{tabonsets} in one spatial dimension. For each pattern onset, the top panel shows a heat map of the linearized patterns $\Delta R(x,\theta)$ in $x-\theta-$ space, the bottom panel shows the corresponding
pattern in the total spatial density $\Delta R_{\rm{tot}}(x)=\int_0^{2\pi}\Delta R(x,\theta)\,d\theta$. For presentation purposes, we scaled densities to take values between $-1$ and $+1$. 
Here the length of the spatial interval is $L=10$. For AI, AGS and ALS, we chose $k_1=\frac{3\pi}{5}$.
Additionally for ULS,  $k_2=\frac{3\pi}{5}$ and for ALS,  $k_2=\frac{\pi}{5}$.}\label{figonsets}
\end{figure}

\bigskip

\subsection{Phase Diagram of Pattern Onsets} \label{phases_onset}
The linearization result in Proposition~\ref{linearization} along with the classification of possible pattern onsets (Table~\ref{tabonsets} and Figure~\ref{figonsets}) makes it possible to
determine a phase diagram of possible behavior of the system (\ref{diffeq}) according to the parameters $J_0, J$ and $K$. Define first the nondimensional parameters
\[
J^*=\frac{\bar{R}\rho^{d+1}}{\D}J, \quad J_0^*=\frac{\bar{R}\rho^{d+1}}{\D}J_0,\quad 
K^*=\frac{\bar{R}\rho^{d+2}}{\D}K
\] 
With this notation, the function $\sigma_1$ and $\sigma_3$ can be written as
 \begin{linenomath}
\begin{align*}
\sigma_1(k^*)&=-(k^*)^2+2 J_0^*\,  F_d(k^*)\\
\sigma_3(k^*)&=-(k^*)^2 -\Dc^*+ J^*  F_d(k^*) + K^* G_d(k^*) 
\end{align*}
 \end{linenomath}
where $\Dc^*=\rho^2\Dc/\D$. We restrict ourselves to parameter ranges that are consistent with the assumption 
that a meaningful cell density can be defined; see Appendix~\ref{constr_j0jk}. 

The classification of patterns in Table~\ref{tabonsets} is given in terms of the growth rates $\lambda_1$ and $\lambda_2$, which 
up to constants are the maxima of the functions $\sigma_1$ and $\sigma_3$. 
Thus which onset pattern occurs only depends on the  dimensionless values
$J_0^*, J^*, K^*$ and $\Dc^*$.
Note that we assume that  $\Dc^*\ll 1$, and in fact we are most interested in the case $\D^*=0$, i.e. the absence of diffusion in the
clock phase $\theta$. We concentrate on this case $\Dc^*=0$ in this section.

Pattern onsets depend on the signs of $\lambda_1$ and $\lambda_2$ and on whether $k_2$ is zero or nonzero (Table~\ref{tabonsets}).
The following two propositions  give us conditions on the parameters $J^*, K^*$ and $J^*_0$.
First, the sign of $\lambda_1$ is determined by $J_0$:
\begin{proposition}\label{propj0}
In each dimension $d=1,2$ there is a  value $J^d_{\rm{crit}}=\frac{1}{F''_d(0)}$ such that
\footnote{{\bf{We note the following sublety: It is possible that for small spatial domain sizes $L$, the maximum of $\sigma_1(k^*)$ for all real  $k^*>0$ may be positive, but still
the maximum on  those wavenumbers $k^*$ that respect the periodic boundary condition may be negative (so $\lambda_1<0$). 
To avoid such situations, the result is to be understood as valid for sufficently large domain sizes $L$. }}}
\[
\lambda_1<0 \text{ for }J_0^*<J^d_{\rm{crit}}\quad \text{ and } \lambda_1>0 \text{ for }J^*_0>J^d_{\rm{crit}}.
\]  
One computes
 \begin{linenomath}
\begin{align*}
            J^1_{\rm{crit}}&=\frac{1}{2\pi} \text{ in 1D}\\
	  J^2_{\rm{crit}}&=\frac{3}{2\pi^2} \text{ in 2D}
\end{align*}
 \end{linenomath}
\end{proposition}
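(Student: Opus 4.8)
## Proof Proposal for Proposition 3.4

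The plan is to reduce the statement to an analysis of the scalar function $\sigma_1(k^*) = -(k^*)^2 + 2J_0^* F_d(k^*)$ near $k^* = 0$, since $\lambda_1 = \max_{k^*>0}\frac{\D}{\rho^2}\sigma_1(k^*)$ and $\D/\rho^2 > 0$, so the sign of $\lambda_1$ is the sign of $\sup_{k^*>0}\sigma_1(k^*)$. First I would record the key qualitative features of $F_d$: from the explicit formula $F_1(k^*) = 2\pi(1 - \cos k^*)$ one reads off $F_1(0) = 0$, $F_1'(0) = 0$, $F_1''(0) = 2\pi > 0$, and that $F_1$ is bounded. For $d = 2$, I would invoke the integral definition from Appendix~\ref{appendix_lin_stab} to establish the analogous facts: $F_2(0) = 0$, $F_2'(0) = 0$, $F_2''(0) > 0$, with $F_2''(0) = \frac{2\pi^2}{3}$ by direct computation (Taylor-expanding the Bessel/Struve representation, or equivalently expanding the defining integral $\int_{D_\rho(0)} \cos(k\cdot \xt)\,\frac{\xt}{|\xt|}\,d^n\xt$ to second order in $|k|$). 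Since $J^d_{\rm crit} = 1/F_d''(0)$, this yields $J^1_{\rm crit} = \frac{1}{2\pi}$ and $J^2_{\rm crit} = \frac{3}{2\pi^2}$ as claimed.

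Next I would prove the dichotomy. Near $k^* = 0$ we have the Taylor expansion $\sigma_1(k^*) = \big(-1 + J_0^* F_d''(0)\big)(k^*)^2 + o\big((k^*)^2\big) = F_d''(0)\big(J_0^* - J^d_{\rm crit}\big)(k^*)^2 + o\big((k^*)^2\big)$. Hence if $J_0^* > J^d_{\rm crit}$, then $\sigma_1(k^*) > 0$ for all sufficiently small $k^* > 0$, so $\lambda_1 > 0$ (this is where the footnote's caveat enters: for a finite domain we need $L$ large enough that some admissible wavenumber $k^* = |\rho k|$ with $\frac{L}{2\pi}k \in \mathbb{Z}$ lands in the interval where $\sigma_1 > 0$; for $L \to \infty$ the admissible wavenumbers become dense in $[0,\infty)$, so this is automatic). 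For the converse direction, $J_0^* < J^d_{\rm crit}$, I need $\sigma_1(k^*) < 0$ for \emph{all} $k^* > 0$, not merely near the origin. The local expansion handles a neighborhood of $0$; for $k^*$ bounded away from $0$, I use that $F_d$ is bounded, say $F_d(k^*) \le M_d$, so $\sigma_1(k^*) \le -(k^*)^2 + 2J_0^* M_d$, which is negative once $(k^*)^2 > 2J_0^* M_d$. On the remaining compact interval $[\delta, \sqrt{2J_0^* M_d}]$ one needs a uniform bound; the cleanest route is to show directly from the formula (for $d=1$, using $1 - \cos k^* \le \frac{(k^*)^2}{2}$ globally, hence $\sigma_1(k^*) \le (2\pi J_0^* - 1)(k^*)^2 < 0$ for $J_0^* < \frac{1}{2\pi}$; for $d = 2$, an analogous global inequality $F_2(k^*) \le \frac{1}{2}F_2''(0)(k^*)^2$ should hold by concavity-type properties of the Bessel representation, or one verifies $\sigma_2$ is negative on the relevant compact set by monotonicity/sign analysis of its derivative).

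The main obstacle I anticipate is the global (as opposed to local) negativity claim in the $d = 2$ case: unlike the elementary $1 - \cos k^* \le \frac{(k^*)^2}{2}$, I do not have an obviously elementary global bound $F_2(k^*) \le \frac{1}{2}F_2''(0)(k^*)^2$, so I would either (i) establish such an inequality from the Bessel/Struve form — plausibly reducing to a known monotonicity property of Bessel functions — or (ii) bypass it by a two-region argument as above: control $k^*$ large via boundedness of $F_2$, and on the intermediate compact region show $\sigma_1' < 0$ after its single interior critical point, so the only candidate for a positive value is near $k^* = 0$, which the Taylor expansion rules out when $J_0^* < J^2_{\rm crit}$. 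A secondary, more pedestrian obstacle is simply verifying $F_2''(0) = \frac{2\pi^2}{3}$ cleanly; I would do this by expanding $\int_{D_\rho(0)} \frac{\xt}{|\xt|}\cos(k\cdot\xt)\,d^2\xt$ in polar coordinates with $\rho = 1$, using $\int_0^{2\pi}\cos^2\phi\,d\phi = \pi$ and $\int_0^1 r^2\,dr = \frac13$, giving the $-\frac12 \cdot \pi \cdot \frac13 \cdot |k|^2$ second-order coefficient of the relevant component, from which $F_2''(0)$ follows.
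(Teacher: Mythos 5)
Your proposal is correct and follows essentially the same route as the paper: the printed proof simply observes $\sigma_1(0)=\sigma_1'(0)=0$ and reads off the sign of $\sigma_1''(0)=-2+2J_0^*F_d''(0)$, which is exactly your local Taylor-expansion step, with the same constants $F_1''(0)=2\pi$ and $F_2''(0)=\tfrac{2\pi^2}{3}$ giving $J^1_{\mathrm{crit}}=\tfrac{1}{2\pi}$ and $J^2_{\mathrm{crit}}=\tfrac{3}{2\pi^2}$. The global-negativity check you add for $J_0^*<J^d_{\mathrm{crit}}$ (the bound $1-\cos k^*\le \tfrac{1}{2}(k^*)^2$ in 1D and the two-region argument you sketch in 2D) is not carried out in the paper at all — its proof asserts the conclusion ``follows immediately'' from the sign of $\sigma_1''(0)$ — so your extra care there is a refinement of, not a departure from, the paper's argument, and the 2D inequality you flag as an obstacle is likewise left implicit in the published proof.
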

\begin{proof}
The sign of $\lambda_1$ is determined by whether  $\sigma_1(k^*)$ has a positive global maximum for $k^*\geq 0$ or not.
By inspection, we have $\sigma_1(0)=\sigma_1'(0)=0$. It is thus the sign of $\sigma''(0)=-2+2J_0^* F''_d(0)$ which determines the sign of $\lambda_1$.
The above condition on $J^*_0$ follows immediately. 
\end{proof}
In terms of the model, Proposition~\ref{propj0} simply means that in every dimension, there is a critical value for $J^*_0$ above which spatial aggregation occurs spontaneously from 
incoherent, spatially homogeneous initial conditions.
\bigskip

The sign of $\lambda_2$ and whether the condition $|k_2|=0$ is satisfied is determined by $J^*$ and $K^*$.
The $K^*J^*$-parameter plane is divided into three regions:
\begin{proposition}\label{propjk}
In each dimension $d=1,2$ there is a functions $f_d(K^*)$ such that the 
following holds. (See Figure~\ref{figjk}.)

For any point $(K^*,J^*)$ in admissible parameter space (see Appendix~\ref{constr_j0jk}), we have\footnote{{\bf{As in Proposition~\ref{propj0}, the result is valid for sufficiently large spatial  domain sizes $L$; see the footnote there.}}}:
\begin{itemize}
\item If $K^*<0$ and $J^*<f_d(K^*)$, then $\lambda_2<0$.
\item  If $K^*>0$ and $J^*<f_d(K^*)$, then $\lambda_2>0$ and $k_2=0$.
\item  If $J^*>f_d(K^*)$, then $\lambda_2>0$ and $k_2>0$.
\end{itemize}
The function $f_d(k^*)$ is piecewise defined as follows:

For $K^*\geq 0$, 
\[
f_d(K^*)= \frac{2-K^*G_d''(0)} {F_d''(0)}=\begin{cases} \frac{1}{\pi}+\frac{1}{3}K^* & \text { for dimension } d=1\\
  \frac{3}{\pi^2}+\frac{3}{8}K^* & \text { for dimension } d=2
\end{cases}
\]
For $K^*<0$, the graph of $f_d(K^*)$ can be parameterized via 
\[
     \left(\begin{array}{c } K^* \\ f_d(K^*) \end{array}\right)=\frac{1}{G_d(\kappa)F_d'(\kappa)-F_d(\kappa)G'_d(\kappa)}
\left(\begin{array}{c  c}
\kappa^2 F'_d(\kappa)-2\kappa F_d(\kappa)\\
-\kappa^2 G'_d(\kappa)+2\kappa G_d(\kappa)
\end{array}\right)
\quad (\kappa> 0).
\]
\end{proposition}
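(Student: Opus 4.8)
The plan is to work directly with the function
\[
\sigma_3(k^*)=-(k^*)^2+J^*F_d(k^*)+K^*G_d(k^*)
\]
(with $\Dc^*=0$, as in this section), since by definition $\lambda_2$ is positive precisely when $\max_{k^*\ge0}\sigma_3(k^*)>0$, and $k_2$ is the smallest maximizer. Because $F_d$ and $G_d$ are bounded, $-(k^*)^2$ dominates for large $k^*$ and the maximum is attained. I will use the elementary facts — immediate for $d=1$ from $F_1(k^*)=2\pi(1-\cos k^*)$, $G_1(k^*)=2\pi\sin(k^*)/k^*$, and for $d=2$ from the Bessel--Struve formulas of Appendix~\ref{appendix_lin_stab} — that $F_d,G_d$ are smooth and even, with $F_d(0)=F_d'(0)=G_d'(0)=0$, $F_d''(0)>0$, $G_d(0)>0$ and $G_d''(0)<0$. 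In particular $\sigma_3(0)=K^*G_d(0)$ and $\sigma_3''(0)=-2+J^*F_d''(0)+K^*G_d''(0)$, so $\sigma_3''(0)<0$ is equivalent to $J^*<\dfrac{2-K^*G_d''(0)}{F_d''(0)}$; this is the motivation for the definition of $f_d$ when $K^*\ge0$.

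\emph{Case $K^*\ge0$.} Then $\sigma_3(0)=K^*G_d(0)\ge0$, so $\lambda_2\ge0$ and in fact $\lambda_2>0$ for $K^*>0$; the issue is only the location of the maximum. If $J^*>f_d(K^*)$ then $\sigma_3''(0)>0$, so $\sigma_3$ is strictly increasing near $0$, its maximum is at some $k^*>0$, and $k_2>0$. If $J^*<f_d(K^*)$, I claim $\sigma_3(k^*)<\sigma_3(0)$ for all $k^*>0$, whence $k_2=0$. Writing $\sigma_3(k^*)-\sigma_3(0)=\Phi_{K^*}(k^*)-(f_d(K^*)-J^*)F_d(k^*)$ with $\Phi_{K^*}(k^*):=-(k^*)^2+f_d(K^*)F_d(k^*)+K^*\big(G_d(k^*)-G_d(0)\big)$, and using $F_d\ge0$, it suffices to prove $\Phi_{K^*}(k^*)\le0$ (the degenerate zeros of $F_d$ are harmless, since there $\sigma_3(k^*)-\sigma_3(0)=\Phi_{K^*}(k^*)=-(k^*)^2+K^*(G_d(k^*)-G_d(0))<0$ once we know the structural inequalities below). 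Now $\Phi_{K^*}(k^*)$ is affine in $K^*\in[0,\infty)$, so $\Phi_{K^*}\le0$ on this range is equivalent to its value at $K^*=0$ and its $K^*$-coefficient both being $\le0$, i.e. to
\[
F_d(k^*)\le\tfrac12 F_d''(0)\,(k^*)^2,\qquad
-\frac{G_d''(0)}{F_d''(0)}\,F_d(k^*)+G_d(k^*)-G_d(0)\le0\qquad(k^*\ge0).
\]
For $d=1$ these reduce (using $F_1''(0)=2\pi$, $G_1''(0)=-2\pi/3$) to $1-\cos x\le x^2/2$ and $3\sin x\le x(2+\cos x)$, each verified by a short monotonicity argument on the relevant third derivative; for $d=2$ they must be extracted from the explicit $F_2,G_2$ and are consistent with Figure~\ref{figFG}. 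Finally, substituting the values of $F_d''(0),G_d''(0)$ for $d=1,2$ into $f_d(K^*)=(2-K^*G_d''(0))/F_d''(0)$ yields the stated piecewise-linear formulas.

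\emph{Case $K^*<0$.} Here $\sigma_3(0)=K^*G_d(0)<0$, so $\lambda_2>0$ iff $\sigma_3(k^*)>0$ for some $k^*>0$; where $F_d(k^*)>0$ this says $J^*>\psi_{K^*}(k^*):=\big((k^*)^2-K^*G_d(k^*)\big)/F_d(k^*)$, while at zeros of $F_d$ one checks $\sigma_3<0$ directly. As $\psi_{K^*}(k^*)\to+\infty$ both as $k^*\to0^+$ (numerator $\to-K^*G_d(0)>0$) and as $k^*\to\infty$, the quantity $f_d(K^*):=\min_{k^*>0}\psi_{K^*}(k^*)$ is attained; being a minimum of functions affine in $K^*$ it is concave, hence a well-defined continuous function of $K^*$ on $(-\infty,0)$, and it joins the $K^*\ge0$ branch at $K^*=0$, where $f_d(0)=2/F_d''(0)=\inf_{k^*>0}(k^*)^2/F_d(k^*)$ by the first structural inequality above. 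With this $f_d$ we get $\lambda_2<0$ iff $J^*<f_d(K^*)$, and $\lambda_2>0$ with maximizer $>0$ (so $k_2>0$, as $\sigma_3(0)<0$) iff $J^*>f_d(K^*)$, which are the claimed equivalences. At an interior minimizer $\kappa$ of $\psi_{K^*}$ one has $J^*=\psi_{K^*}(\kappa)$ and $\psi_{K^*}'(\kappa)=0$, which rearrange to the linear system
\[
J^*F_d(\kappa)+K^*G_d(\kappa)=\kappa^2,\qquad J^*F_d'(\kappa)+K^*G_d'(\kappa)=2\kappa
\]
(equivalently $\sigma_3(\kappa)=\sigma_3'(\kappa)=0$); solving by Cramer's rule gives exactly the stated parameterization of the graph of $f_d$ for $K^*<0$, with $\kappa\to0^+$ corresponding to $K^*\to0^-$.

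The main obstacle is the global part of the $K^*\ge0$ case — the two structural inequalities in dimension $d=2$ — together with, in parallel, confirming for $K^*<0$ that $\psi_{K^*}$ has a single interior minimizer and that $\kappa\mapsto K^*(\kappa)$ is monotone, so the parameterization is faithful: in $d=1$ everything is an elementary trigonometric estimate, but in $d=2$ these require care because $F_2$ and $G_2$ are oscillating, decaying combinations of Bessel and Struve functions. As the footnotes to the proposition indicate, the fact that the admissible wavenumbers form a discrete set under the periodic boundary conditions is absorbed by taking the spatial domain size $L$ large, so that extrema over the admissible grid agree with those over all $k^*\ge0$.
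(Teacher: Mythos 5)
Your proposal is correct and takes essentially the same route as the paper: both analyze $\sigma_3(k^*)=-(k^*)^2+J^*F_d(k^*)+K^*G_d(k^*)$, obtain the $K^*\ge 0$ branch of $f_d$ from the threshold condition $\sigma_3''(0)=0$, and obtain the $K^*<0$ branch from the tangency system $\sigma_3(\kappa)=\sigma_3'(\kappa)=0$ solved for $(K^*,J^*)$, which yields the stated parameterization. Where the paper argues ``by inspection'' of $F_d$ and $G_d$, you add the global justification (the two structural inequalities, proved in $d=1$ and checked against the plots in $d=2$, plus the $\min_{k^*>0}\psi_{K^*}$ characterization for $K^*<0$), so your write-up is, if anything, more complete than the paper's own proof.
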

\begin{proof}
Recall the definition that $\lambda_2$ is the (up to a constant) the maximum of $\sigma_3(k^*)=-(k^*)^2+J^*F_d(k^*)+K^*G_d(k^*)$ for $k^*\geq 0$ and $k_2$ is the smallest value at which it is attained.
Note that $\sigma_3(0)=K^*G_d(0)$ with $G_d(0)>0$.

Consider first the case $K^*>0$. Then $\lambda_2>0$. Moreover, by inspection of $F_d$ and $G_d$, for
$J^*\leq 0$, the maximum of $\sigma_3(k^*)$ occurs at $k^*=0$, so $k_2=0$ then. (See also Figure~\ref{figFG}.)
For small positive $J^*$, the maximum will still occur at $k^*=0$. As $J^*$ increases for fixed $K^*>0$, there is some critical value $J^*=f_d(K^*)$ above which the positive maximum occurs 
at a value $k^*>0$. The condition for this critical value is that $\sigma_3''(0)=0$. This means
\[
 \sigma_3''(0)=-2+J^*F''_d(0)+K^*G''_d(k^*)=0\quad\text{ at }J^*=f_d(K^*)
\]
This gives the formula for $f_d(K^*)$ for $K^*>0.$

Now consider the case  $K^*<0$. Then $\sigma_3(0)<0$. If $J^*<0$, then the maximum of $\sigma_3(k^*)$ is negative by our assumptions on the permissible range of $J^*$ and $K^*$ (see Appendix~\ref{constr_j0jk}),
so $\lambda_2<0$ then. For positive $J^*$, as $J^*$ increases, there is a critical value $J^*=f_d(K^*)$ above which a positive maximum occurs for some value $k^*=\kappa>0$. The condition for this is 
\[
\left\{\begin{array}{l}*\sigma_3(\kappa)=-\kappa^2+J^*F_d(\kappa)+K^* G_d(\kappa)=0,\\
 \sigma'_3(\kappa)=-2\kappa+J^*F'_d(\kappa)+K^* G'_d(\kappa)=0
\end{array}\right\} \quad\text{ at }J^*=f_d(K^*).
\]
Solving this for $(K^*,J^*)$ gives a paramaterization (with parameter $\kappa>0$) for the graph of $J^*=f_d(K^*)$ for $K^*<0$. 
\end{proof}
Propositions~\ref{propj0} and \ref{propjk} in conjunction with Table~\ref{tabonsets} now allow for a complete classification of the onset of patterns in dependence of the dimensionless parameters $J^*_0$, $J^*$ and $K^*.$ Figure~\ref{figcharonsets}
summarizes this graphically. 

\begin{figure}
\includegraphics[width=\textwidth]{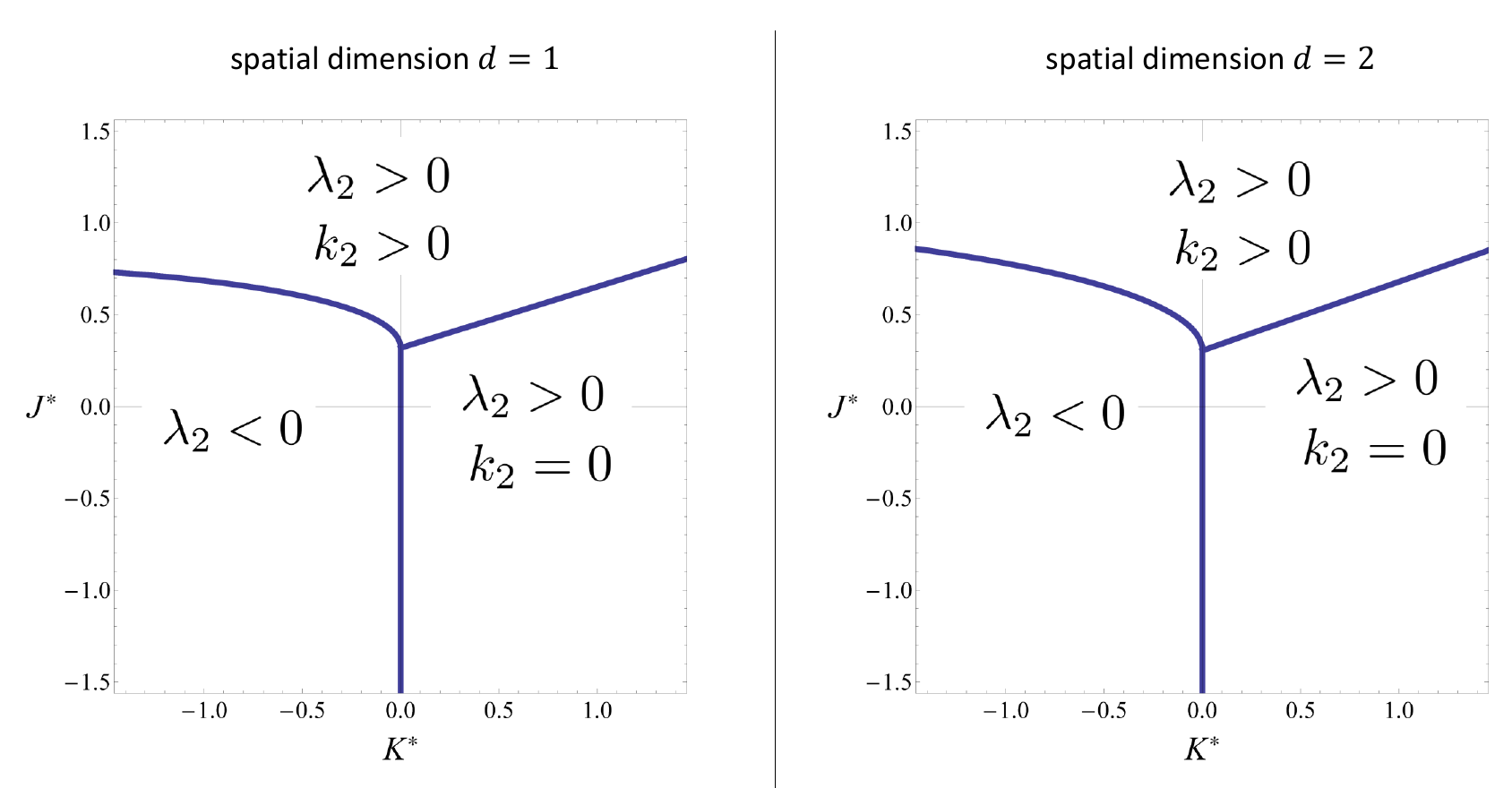}
\caption{Graphical representation of the result in Proposition~\ref{propjk}. Note that the images are slightly different in one and two spatial dimensions, although the overall appearance is very similar.}\label{figjk}
\end{figure}

\begin{figure}
\includegraphics[width=\textwidth]{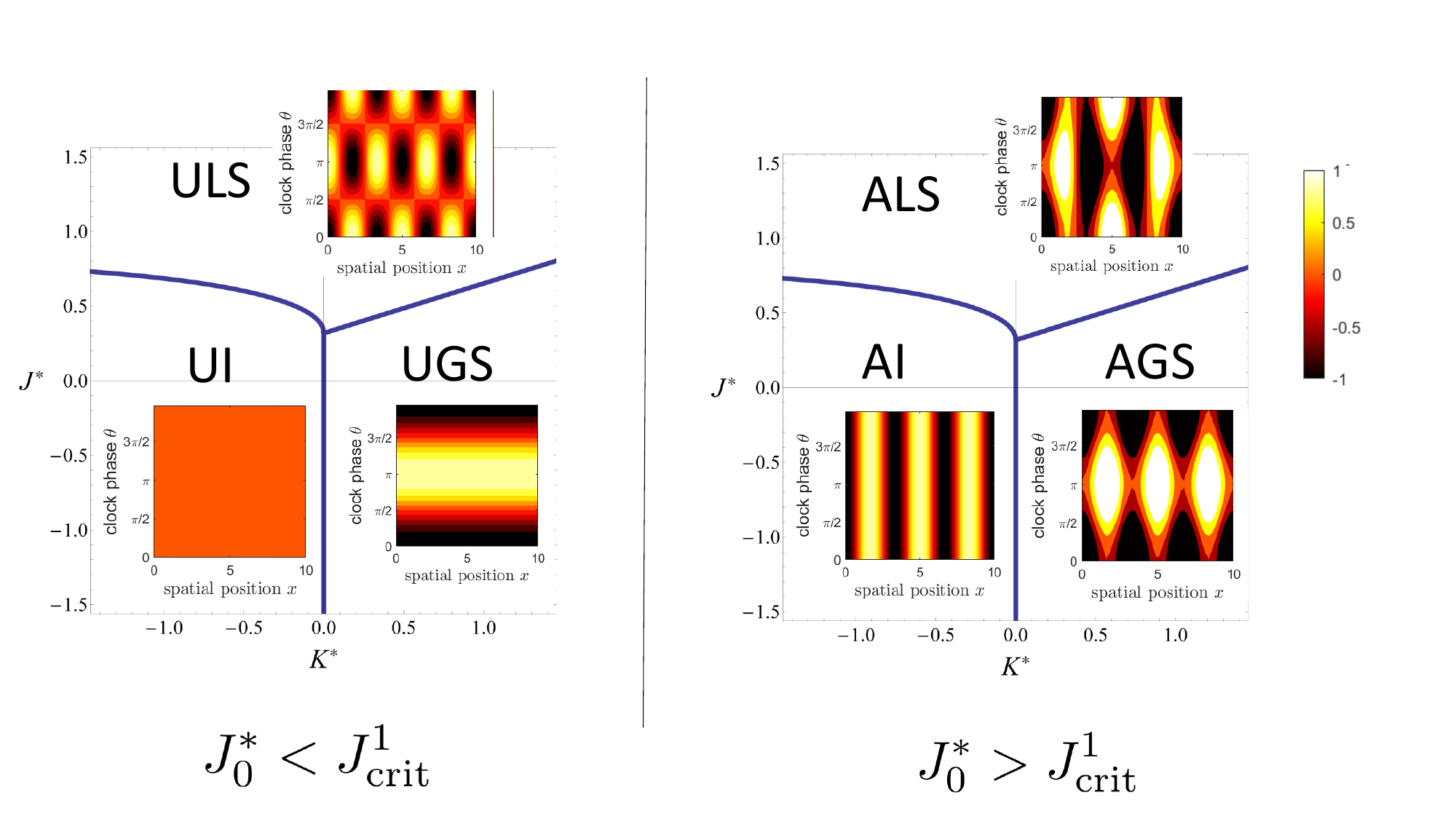}
\caption{Phase diagrams of onsets of patterning in $J^*K^*-$space. We show the case of $d=1$ spatial dimension; the case $d=2$ is very similar (Figure~\ref{figjk}).
See Table~\ref{tabonsets} for the classification of the different pattern onsets. Results are based on Propositions~\ref{propj0} and \ref{propjk}.
Left diagram: Case $J_0^*<J^1_{\rm{crit}}.$ In this case, there is no spatial aggregation and  pattern onsets are nonconstant in clock space only.
The three possibilities are uniform incoherent (UI), uniform locally synchronized (ULS) and uniform globally synchronized (UGS).
Right diagram: Case $J_0^*>J^1_{\rm{crit}}.$ In this case, spatial aggregation occurs simultaneously with patterning in clock space only.
The three possibilities are aggregated incoherent (AI), aggregated locally synchronized (ALS) and aggregated globally synchronized (AGS).
For the reader's convenience, we include the example densities patterns in $x\theta$-space from Figure~\ref{figonsets}. } \label{figcharonsets}
\end{figure}

\section{Far-From Equilibrium Behavior and Numerical Simulations}\label{section_numerics}
In the previous section, we used linear stability analysis to classify the {\it{onset}} of
patterns on the oscillator aggregation model from a spatially homogeneous, incoherent state (summarized in   Figure~\ref{figcharonsets}).
This does not necessarily give information about the {\it{far-from equilibrium}} behavior of the system.
There is one exception to this: the UI (uniform incoherence) case. Then all modes are linearly stable. As a consequence, no patterns can form in physical space or clock phase space. 
(For initial conditions that are homogeneous in the spatial coordinate $x$, i.e., $R(t=0,x,\theta)=R_0(\theta)$,
it is even possible to write out an explicit solution using Fourier analsyis and rigorously show that for $K<0$, the solution
converges to the incoherent state, see Appendix~\ref{appendixtheta}.)

In general, a natural expectation is that pattern onsets will generally lead to steady state patterns
with similar behavior. Specifically, if spatial  aggregation occurs in the pattern onset, i.e., if
the total spatial density $\bar{R}(t,x)=\int_0^{2\pi}R(t,x,\theta)d\theta$ has spatial peaks and valleys, this will lead to far-from equilibrium patterns 
with spatial aggregation.

Similarly, it is a natural expectation that pattern onsets of the ``globally synchronized" variety (UGS, AGS) actually  give
rise to globally synchronized far-from-equilibrium patterns. In other words, for zero clock diffusivity ($\Dc=0$), one expects convergence to a steady state pattern of the form
$R(x,\theta)=R_0(x)\delta(\theta-\theta_0)$ for some fixed $\theta_0$. This means that all 
oscillators are perfectly synchronized and the total spatial density is given by $R_0(x)$; see also Theorem~\ref{dirac}.
Note then that a linear stability
can be applied to $R_0(x)$. A formal argument gives the same result as Proposition~\ref{linearization} (\ref{q0})
with the only change being that we have to replace $J_0$ by $J_0+J$. By Proposition~\ref{propj0}, this means that now we 
expect patterns in the total spatial density for $J^*_0+J^*>J^d_{\rm{crit}}$.

These arguments yield that for the  pattern onsets UI and AGS, one expects similar behavior of the far-from equilibrium patterns of uniform (unaggregated) incoherent patterns and
globally synchronized patterns with spatial aggregations, respectively.
In the case of UGS (uniform globally synchronized) pattern onsets in $d$ dimensions, the above arguments suggest that for $J^*_0+J^*<J^d_{\rm{crit}}$, the far-from-equilibrium pattern
should be globally synchronized and also spatially uniform (i.e., without spatial aggregations). But for  $J^*_0+J^*>J^d_{\rm{crit}}$, one expects that even the UGS pattern onset
will give rise to aggregations once cells are sufficiently synchronized. Similar arguments apply to the ULS pattern onset.

In this section, we  compare the pattern onsets from  linear stability analysis 
with the far-from equilibrium patterns obtained by numerical simulations. We concentrate on the case $J_0=0$.
This means that cells with a $\frac{\pi}{2}$-phase shift neither repel nor attract each other;
in particular, cells in opposite phases repel each other and  there is no spatial aggregation in the incoherent phase.
We investigate the behavior in dependence of the parameter $J^*$ and $K^*$. We consider two computational models:
 The first is a numerical solution of the system (\ref{diffeq}) in one spatial dimensions. The second is an implementation of
a Lattice-Gas Cellular Automaton (LGCA) in two spatial dimensions representing  a discretized version of the PDE model (\ref{diffeq}).

\subsection{Numerical Solution of the PDE \ref{diffeq} in one spatial dimension}
We used the finite element solver FreeFEM++ \cite{freefem} to solve the PDE (\ref{diffeq}) in one spatial dimension for $J_0=0$. 
 Figure~\ref{fig_example_comp} summarizes the results with steady state patterns shown for different values of $J^*$ and $K^*$ obtained from
initial conditions with random small perturbations from the incoherent spatially uniform state $R_0(x,\theta)=1$.

To quantify the degree of synchronization of a distribution $R(t,x,\theta)$, we used  the $r$-parameter
first introduced by Kuramoto ($\sigma$ in Equation (5.4.13) in \cite{kuramoto1984}).  Here this is given by
\begin{equation}\label{r}
r=r(t)=\left|\int_{\Omega}\int_0^{2\pi}e^{i\theta}R(t,x,\theta)d\theta d^nx \middle/ \int_{\Omega}\int_0^{2\pi}R(t,x,\theta)d\theta d^nx\right|
\end{equation}
It represents the modulus of the mean value of $e^{i\theta}$. Note that $0\leq r\leq 1$. In the completely incoherent case $R(t,x,\theta)=R(t,x),$ we have
$r=0$. In contrast, perfect synchronization ($R(t,x,\theta)=R_c(t,x)\delta(\theta-\theta_0)$ for some $\theta_0\in[0,2\pi)$) corresponds to $r=1$.
%
%

The  phase diagram in Figure~\ref{fig_conj} summarizes the results using a combination of the linear stability analysis of pattern onsets (Figure~\ref{figcharonsets}, left panel)
and example numerical computations for certain pairs of parameters $(J^*,K^*)$ denoted by Roman numerals I--VII (Figure~\ref{fig_example_comp}).
The parameter space splits up into five regions separated by the curves $K^*=0$, $J^*=J^1_{\rm{crit}}=1/2\pi$ ($K^*>0$) and $J^*=f_1(K^*)$ (see Proposition~\ref{propjk}).
For the UI onset case, solutions also display spatial uniformity and clock incoherence (pairs I and II). The region of the UGS onset case displays two different possible behaviors:
In the region $J^*<J^1_{\rm{crit}}$, we obtain global synchronization without spatial aggregation (IV, V). In the slice of this region where  $J^*>J^1_{\rm{crit}}$, 
there is some spatial aggregation with a small amplitude in addition to global synchronization in the nonlinear system even though the linear pattern onset is spatially uniform (VI). 
The reason is that once synchronized, the adhesion strength  between cells becomes  sufficiently strong to  cause aggregation.

The ULS onset case is the most interesting. Our results indicate that for  $K^*>0$, the far-from-equilibrium behavior is as suggested by the linear stability analysis, but with additional subsequent spatial aggregation:
Local, but not global synchronization occurs, followed by spatial aggregation into segregated clusters. These clusters are synchronized within themselves, but not among each other (VII).
But for $K^*<0$, our results indicate a new kind of phase that is substantially different from all the linear patterns (III). This is the case of what in \cite{okeeffe2017} is referred to as a ``phase wave": Here spatial position is correlated with clock phase, i.e. the mean clock $\theta$ increases linearly with position $x$. We also observe some spatial aggregation along with the 
establishment of the phase wave pattern.

\begin{figure}
\includegraphics[width=0.7\textwidth]{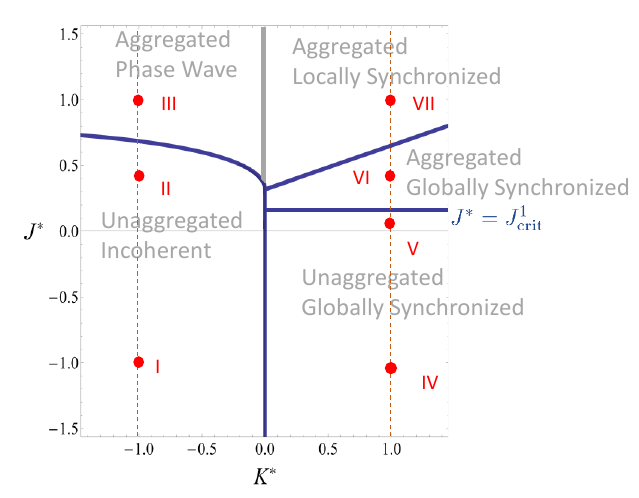}
\caption{Phase diagram of far-from-equilibrium patterning in one spatial dimension for Equation (\ref{diffeq}) in $J^*K^*-$space (with $J^*=0$). This is based on the onset diagram in Fig~\ref{figjk} (left panel).
The horizontal line on the $K^*>0$-half plane corresponds to the critical value $J^d_{\rm{crit}}$ in $d=1$ dimension.
For example numerical computation for the parameters indicated by Roman numerals, see
Figure~\ref{fig_example_comp}. Specifically, $(J^*,K^*)$ values are: I $(-1,-1)$, II $(0.4,-1)$, III $(1,-1)$, IV $(-1,1)$, V $(0.05,1)$, VI $(0.4,1)$, VII $(1,1)$. } \label{fig_conj}
\end{figure}

\begin{figure}
\includegraphics[width=\textwidth]{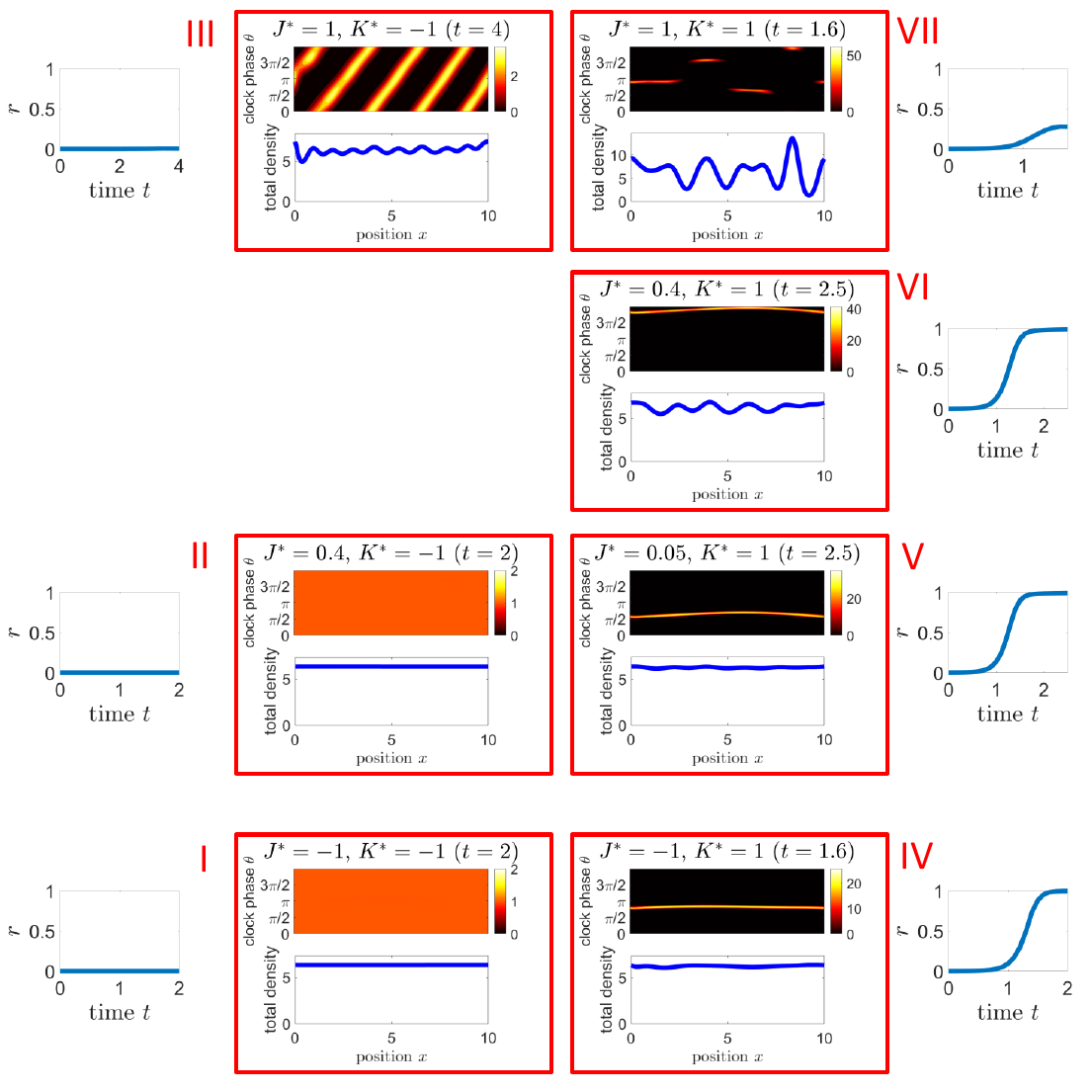}
\caption{Steady state distributions from numerical solutions of 
(\ref{diffeq}) with the parameters indicated in
Figure~\ref{fig_conj}. For each simulation, the graph of the Kuramato $r-$parameter as a function of time is shown. Parameters were $J_0=0$, 
$\D=1, \Dc=0.05, \rho=1$ on the spatial interval $[0,10]$ with periodic boundary conditions. 
A logistic function with $R_{{\max}}=30$ was chosen in the spatial flux term, see (\ref{sigma}).}\label{fig_example_comp}
\end{figure}

\newcommand{\s}{{\mathbf{s}}}
\newcommand{\clock}{{\bm{\theta}}}

\subsection{A Lattice-Gas Cellular Automaton model in two spatial dimensions}\label{section_LGCA}
{\rev{To investigate spatial patterns in two dimensions and to confirm the one-dimensional results, we set up a Lattice-Gas Cellular Automaton model based on the same principles as the PDE model (\ref{diffeq}). The fact that each cell is modeled as a point in a lattice provides a much better visualization
than the density-based approach of the PDE version (\ref{diffeq}); see Figure~\ref{fig_lgca_results}. We also found the PDE to be numerically challenging for two spatial dimensions as it is effectively a three dimensional problem with two spatial dimensions plus clock phase. In particular, the case $K>0$, when  neighboring clocks tend to synchronize, leads to sharp gradients along lines of constant $\theta$; see Figure~\ref{fig_example_comp}. Avoiding numerical instabilities requires very fine grids and small time steps.  
In contrast,  a cellular automaton scheme is more straightforward to implement numerically. Our results in this section also illustrate that our linear stability analysis and the analysis of the previous section are to some extent 
independent of the exact model implementation. Indeed, they also give insights into other models encoding the same basic rules of interaction between adhering oscillators.}} 

{\rev{Our}} Lattice-Gas Cellular Automaton model is based on models of cell-cell adhesion and aggrgeation (\cite{bussemaker1996} and \cite{deutschdormann2017} (chapter 7)); see  Figure~\ref{fig_lgca_schematics}.
The model is using a 2-dimensional $50\times 50$ hexagonal lattice with periodic boundary conditions. Each hexagonal node has six ``channels."  Each channel corresponds to a direction of motion. 
{\rev{Each of the six channels in a hexagon can either be occupied by one particle or unoccupied. So there are between zero and six particles in a hexagon at a given time, all in different channels. }} 
Additionally, each particle has a clock associated with it. The ``state" of a node is the collection of occupied channels in it with corresponding clock values, i.e., collections $\{s_i\}_{i=1,\ldots,6}$, where $s_i$ is the occupancy number (0 or 1) of the $i$th channel, and corresponding clock values $\{\theta_i\}_{i=1,\ldots,6}$.

The key idea of the model is that spatiotemporal evolution is modeled via two {\rev{alternating}} steps:{\rev{ an interaction step and a propagation step}. (See Figure~\ref{fig_lgca_schematics}.) In the propagation step, cells move from one node to a neighboring one in the direction given by their channel.
In the interaction step, the state of each node is updated. For this, each possible rearrangement of the particles in a node (state $\s=\{s_i\}_{i=1,\ldots,6}$ with clock values $\clock=\{\theta_i\}_{i=1,\ldots,6}$) is assigned an energy $H(\s,\clock)$. A new state $(\s,\clock)$ is then chosen with probability 
\[P(\s,\clock) = \frac{1}{Z}\exp(H(\s,\clock)),\]
where $Z$ is a constant chosen such that the sum of all probabilities is unity. In our case, the energy $H(\s,\clock)$ is calculated by considering how each particle  interacts with the neighboring particles. Specifically,

\begin{equation}\label{H}
H(\s,\clock) = \sum_{\substack{\text{channel }\\i=1,\ldots,6}} \sum_\text{neighbors}s_i\,\vec{c}_i \cdot \vec{p}_\text{neighbor}\,(J_0 + J\cos(\theta_\text{neighbor}-\theta_i))
\end{equation} 
Here $\vec{c}_i$ is the vector of direction corresponding to channel $i$ and $\vec{p}_\text{neighbor}$ is a vector in the direction of a (first order) neighbor's lattice site; see Figure~\ref{fig_lgca_h}. 
The clock values of the particle in channel $i$ and the neighbor are  $\theta_i$ and $\theta_\text{neighbor}$, respectively. 
The function $H(\s,\clock)$ encodes a flux along the direction of highest cell density, modulated by a term depending on the cell clock{\rev{, exactly  like  our PDE model (\ref{diffeq}) based on Armstrong et al's PDE model of cell-cell adhesion \cite{armstrongetal2006}; see also \cite{buttenschonetal2018}.}}

Finally, the clocks are updated based on a discrete Kuramoto-type model. {\rev{For this, every LGCA step is subdivided into $n$ subintervals of length $\Delta t=1/n$. (In simulations, we used $n=10$.)}  In each subinterval, the change in clock value of the particle at channel $i$ is given by
\[ \Delta\theta_i =(\omega + K\frac{1}{N_i}\sum_{j=1}^{N_i} \sin(\theta_j-\theta_i))\Delta t\] 
Here the sum is over the first order neighbors of the particle in channel $i$ and the particles in the other channels of the same hexagonal node (total $N_i$ particles). As in the PDE model (\ref{diffeq}), we can assume without loss of generality that $\omega=0$, by shifting the frame in clock phase by
the term $\omega t$, where $t$ is the number of time steps.

As in the PDE case (Equation (\ref{r})), one can define an $r-$parameter which quantifies the degree of {\em{local}} synchronization. In the LGCA model, this is given by
\[
r=\left|\frac{1}{N}\sum_{k=1}^Ne^{i\theta_k}\right|,
\]
where $N$ is the total number of particles. Again $r=1$ means complete synchronization and $r=0$ complete incoherence. We also considered $r_{\rm{local}}$, a local variation of $r$, where the sum is taken at each node over the first-order neighbors only and then averaged over all nodes. This quantifies
the degree of local synchronization.

Note that the model contains parameters $J_0,J$ and $K$ whose meaning is completely analogous to that of the PDE model (\ref{diffeq}), and equally the interpretations in Table~\ref{tab1} apply here.

One thus expects similar behavior as the displayed by PDE  version (\ref{diffeq}) displayed in Figure~\ref{fig_conj}. (Note that the phase diagram is for one spatial dimension,
but as shown in Figure~\ref{figjk}, the 2D case is substantially the same, at least its linearization.)
To test this, we conducted numerical simulations of this LGCA model with $J_0=0$ and various values of $J$ and $K$. As illustrated in 
Figure~\ref{fig_lgca_results}, this two dimensional LGCA model shows the same phases as the PDE model with the possible exception of the ``aggregated global synchronized" phase.
While the results of Figure~\ref{fig_conj} do not apply quantitatively to the LGCA model, their qualitative behavior is equivalent: For negative $K$ and $J$, the LGCA model also displays incoherent clock phases and lack of aggregation.
For positive $K$ and negative $J$, global synchronization occurs without spatial aggregation. For positive $J$ and $K$, we observed local aggregation into separate clusters, with synchronization within, but not among clusters. {\rev{For large $K$ and small $J$, aggregation occurs along global synchronization. In the LGCA, we observed for $J=0.1, K=10$ that aggregation into diffuse, locally synchronized clusters happened first, with subsequent synchronization of the clusters.}}
Finally, for negative values of $K$ close to 0 and large values of $J$, we observed a phase apparently equivalent to the ``aggregated phase wave" in Figure~\ref{fig_conj}. Here, some spatial aggregation occured, but the individual clusters did not synchronize. 
They were not completely incoherent either though, as indicated by its local $r-$value $r_{\rm{local}}$, which was much larger than for the incoherent phase; see Figure~\ref{fig_lgca_results}. This indicates a partial sorting by phases within clusters.
\begin{figure}
\includegraphics[width=\textwidth]{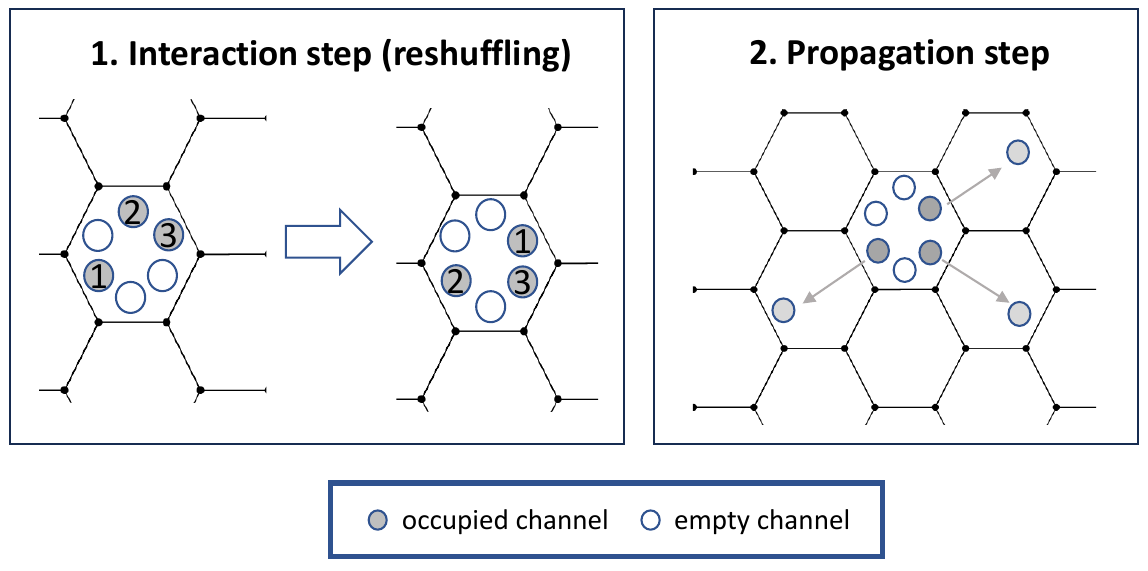}
\caption{Schematics of the Lattice-Gas Cellular Automaton model \cite{deutschdormann2017,bussemaker1996} : {\rev{Spatiotemporal evolution is modeled via two alternating steps on a hexagonal lattice. Each lattice site has six channels, which each can be occupied by a single particle (a cell) or empty.
In the interaction steps (left), particles within each hexagon are shuffled among the channels according to probabilistic rules. (Each particle has its own clock, so they are distinguishable. This is indicated in the example by labels 1 through 3.) In the propagation step (right), particles move to a neighboring hexagon according to their channel.}}   }\label{fig_lgca_schematics}
\end{figure}

\begin{figure}
\includegraphics[width=0.4\textwidth]{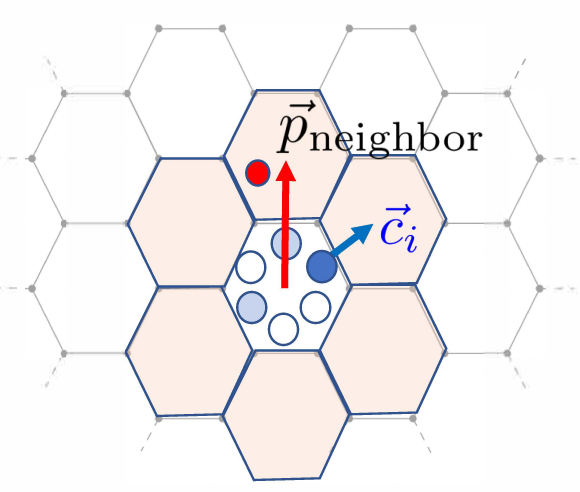}
\caption{Sketch for definition of the energy $H$ in Equation (\ref{H}): {\rev{In the interaction step, reshuffling of particles among channels is biased towards rearrangements with higher energy $H$. 
The sketch shows a possible rearrangement with a particle (blue)  with velocity vector $\vec{c}_i$ and one of its neighbors (red) with position vector $\vec{p}_\text{neighbor}$. The pair contributes the term $\vec{c}_i \cdot \vec{p}_\text{neighbor}\,  \phi(\theta_\text{neighbor},\theta_i)$  to $H$,   
where $\phi(\theta_\text{neighbor},\theta_i)=J_0 + J\cos(\theta_\text{neighbor}-\theta_i)$.   So e.g. if $\phi(\theta_\text{neighbor},\theta_i)>0$, there is a resulting bias towards rearrangements where the velocity vector $\vec{c}_i $ is directed towards the neighbor.}}  }\label{fig_lgca_h}    
\end{figure}

\begin{figure}
\includegraphics[width=\textwidth]{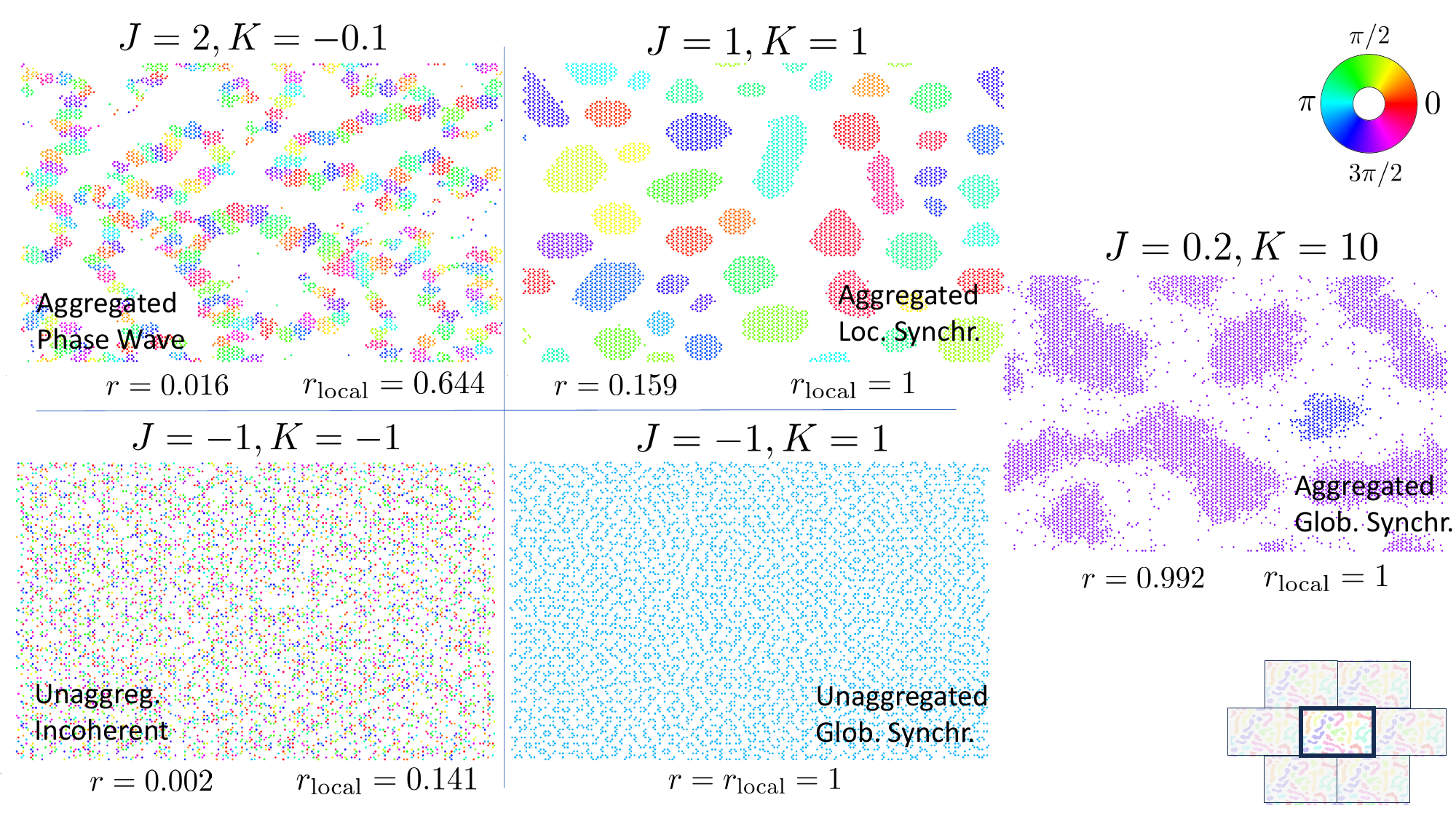}
\caption{Typical results for $J_0=0$ and different values of $J$ and $K$ in the LGCA model. Note that qualitatively, these phases correspond to
the ones obtained with the PDE model for Figure~\ref{fig_conj}. Different phases are indicated by colors; see the color wheel for reference.
The images show configurations obtained from random initial conditions after 2000 iterations on a 50$\times$50 hexagonal grid with periodic boundary conditions. 
{\rev{(The boundary conditions are illustrated in the bottom right corner.)}}
There were $N=6,000$ particles in each simulation, corresponding to a confluency (percentage of occupied channels overall) of $40\%$.}
\label{fig_lgca_results}
\end{figure}

\section{Discussion}
We have investigated the interplay of oscillators (cells with intracellular 'clocks') that undergo random motion and adhere to each other.  The strength of adhesion depends on 
the relative clock phases and neighbors can sync or anti-sync. A linear stability analysis allows for  classification of pattern onsets. We further investigated far-from equilibrium patterns for
the case that cells with a $\frac{\pi}{2}$-phase shift neither repel nor attract each other ($J_0=0$). We identified five different types of patterns: a completely synchronized state without spatial aggregation; 
aggregation into clusters that are internally synchronized, but out of sync with each other; an incoherent, unaggregated state; and a phase wave state with diffuse clusters and spatial gradients of clock phases.
A fifth state shows aggregation into diffuse, globally synchronized clusters. 

{\rev{Some of these phases of our local adhesion/local synchronization model show similarities to the global attraction/local synchronization swarmalator model of O'Keeffe et al. \cite{okeeffe2017}. Specifically,
their ``splintered wave" phase is analogous to the aggregated local synchronized phase (Figure~\ref{fig_lgca_results}) of our model in showing locally synchronized clusters. In the swarmalator case, this happens when neighboring cells are weakly coupled and {\it{de}}synchronize (small negative $K$),
but cells of the same phase attract each other strongly (large positive $J$). When $K$ becomes positive, these local clusters collapse into one synchronized disk-shaped aggregate. In our model, locally synchronized aggregates persist for $K>0$ and $J>0$ because adhesion is local (i.e. only between neighboring cells).
Another phase with paralles between the two models is O'Keeffe's ``active phase wave", where cells move in opposite directions in two concentric rings and cycle through the clock phases.
This leads to an apparent steady state distribution of clock phases along these rings, i.e. a gradient of clock phases. This is somewhat similar to the '``aggregated phase wave'' state in our model, where also spatial gradients
of clock phases exist, best seen in the one-dimensional results in Figure~\ref{fig_example_comp}.}}

A number of open issues present themselves: First, we investigated the far-from-equilibrium behavior only in the natural case of zero baseline adhesion $(J_0=0)$. Our pattern onset classification
extends to positive and negative $J_0$ as well. In particular, the case $J_0>J^d_{\rm{crit}}$ (so that cells aggregate even in the incoherent state) 
promises other types of behavior and an in-depth investigation could be fruitful. We also did not investigate the spatial dynamics of synchronization in our model in detail, i.e., how exactly synchronization patterns arise in time and space,
which again is an interesting avenue for further work.

There are also interesting analytical questions: Numerical simulations indicate that for postive $K$ and negative $J$, solution converge to a Dirac-delta point measure in clock-space and uniform distribution in space when the clock diffusivity is zero ($\Dc=0$).
Can this be proved rigorously? Even more general, do solutions always converge to function of the form $R(x)\delta(\theta-\theta_0)$ for $K>0$? 
For these question, a further examination of the properties of solutions of the  measure-theoretic formulation sketched in section~\ref{section_math} would be helpful (e.g. proving existence and uniqueness). 

As discussed in the introduction, one of our much broader future goal is to use this model and others of its type to  investigate generic mechanisms of aggregation, synchronization and  pattern formation in cell populations. This pertains to organisms on the threshold to multicellularity like Myxobacteria \cite{igoshin2004},
and Dictyostelia \cite{maree2001}, and also to developmental processes like pattern formation mediated by oscillatory signaling in the Notch-Delta circuit \cite{kageyama2018}).
Our hope is to explore broad general principles in the spirit of the conceptual framework of
Dynamical Pattern Modules (DPMs) \cite{newman2009}. Such DPMs are units of gene products associated with physical processes they mobilize, forming  a proposed ``pattern language" for pattern formation inm developmental biology.

Finally, there are some interesting sociological interpretations of our model in the context of polarization of political opinions and the formation of echo chambers
aided by social media \cite{cinelli2021}. (Mathematical models of these phenomena typically use network models, e.g. \cite{baumann2020}.)
 Our model's behavior in the aggregated, locally synchronized state  
(clusters that are synchronized within themselves, but not between each other) shows clear formal similarities to such echo chambers. Our results indicate that this is a natural outcome if neighbors tend to synchronize and ``like attracts like"  ($J>0$). One way 
to achieve global synchronization without formation of such echo chambers is then to  consider negative $J$; the case of  ``opposites attract." This suggest that a way out of the societal polarization is to
get people with different views in contact -- a truism maybe, but this certainly suggests that the model has the potential for less trivial sociological investigations.

\subsection*{Acknowledgements}
The authors acknowledge the financial support of the John Templeton Foundation (\#62220). The opinions expressed in this paper are those of the authors and not those of the John Templeton Foundation.

\bibliography{sync_adhesion_glimm_gruszka}

\newpage
\appendix
\section{Linear Stability Analysis}\label{appendix_lin_stab}
We summarize the computation for the linear stability analysis here whose results are stated in Proposition~\ref{linearization}. 
We only cover the case of $d=2$ spatial dimensions. The simpler case of one spatial dimension ($d=1$) is a straightforward
modification and we leave out the details that yield the corresponding functional forms of $F_1$ and $G_1$.

Consider solutions of the form
\begin{equation} \label{linR}
  R(t,x,\theta)=\Rb+c(t)e^{i(q\theta+k\cdot x)},
\end{equation}
to (\ref{diffeq}). Here $\Rb$ is a constant solution, i.e. an incoherent solution where all clock phases have equal spatially constant density.
The vector $k\in{\mathbb{R}}^2$ is the spatial wavevector. Periodic boundary conditions in the clock phase $\theta$ force that $q\in\mathbb{Z}$.
Below, we compute the terms $\nabla\cdot(\v(t,\theta,x;R)R)$ and $\frac{\partial}{\partial\theta}(\w(t,\theta,x;R)R)$ explicitly:

\bigskip
\noindent{\underline{\bf{Term } $\mathbf{\nabla\cdot(\v(t,\theta,x;R)R)}$}}

\medskip
One computes that
\begin{linenomath} 
\begin{align}
\v(t,x,\theta;R)&=\int_{0}^{2\pi} \int_{D_{\rho}(0)}(J_0+J \cos(\theta-\thetat))\, \left(\Rb+c(t)e^{i(q\thetat+k\cdot (x+\xt))}\right)\, \frac{\xt}{|\xt|} d^2\xt\, d\thetat  \\
&=e^{ikx}c(t) \int_{D_{\rho}(0)}\int_0^{2\pi}(J_0+J \cos(\theta-\thetat))\, e^{iq\thetat}\,d\thetat\, e^{ik\cdot \xt}\, \frac{\xt}{|\xt|} d^2\xt\\
&=\begin{cases}
\pi c(t)Je^{i(q\theta+k\cdot x)}\int_{D_\rho(0)}e^{ik\cdot \xt}\frac{\xt}{|\xt|}d^2\xt & \text{ for } q=\pm 1\\
2\pi c(t)J_0e^{ik\cdot x}\int_{D_\rho(0)}e^{ik\cdot \xt}\frac{\xt}{|\xt|}d^2\xt & \text{ for } q=0\\
0 & \text{ otherwise}
\end{cases}\label{vappendix}
\end{align}
 \end{linenomath}
Here the identity $\int_0^{2\pi}e^{iq\thetat}\cos(\theta-\thetat)d\thetat=\begin{cases}\pi e^{iq\theta} & q=\pm 1\\ 0 &\text{otherwise}\end{cases}$ was used.
To evaluate the integral in the above formula for $\v(t,\theta,x;R)$ in the case $k\neq 0$, rewrite the integral in polar coordinates $\xt=r\cos\phi\, \frac{k}{|k|}+r\sin\phi\, k^{\perp}$, where 
$k^{\perp}$ is a unit vector perpendicular to $k$. We then obtain
 \begin{linenomath}
\begin{align*}
  \int_{D_\rho(0)}e^{ik\cdot \xt}\frac{\xt}{|\xt|}d^2\xt&=\int_0^{2\pi}\int_0^\rho e^{i|k|r\cos\phi}r(\cos\phi\, \frac{k}{|k|}+\sin\phi\, k^{\perp})dr\,d\phi\\
 &=i\,\rho^2 f(|k|,\rho)\frac{k}{|k|},
\end{align*}
 \end{linenomath}
where   $f(|k|,\rho)=\frac{1}{\rho^2}\int_0^\rho\int_0^{2\pi}\sin(|k|r\cos\phi)r\cos\phi\, d\phi\, dr=\int_0^1\int_0^{2\pi}\sin(\rho|k|r\cos\phi)r\cos\phi\, d\phi\, dr$
 is a real-valued function. (The real part of the integral evaluate to zero, as can be seen by symmetry considerations and periodicity of sine and cosine.) Note two things: First, $f(|k|=0,\rho)=0$, so the functional form of $f$ also applies to the case $k=0$
in (\ref{vappendix}). Second, $f(|k|,\rho)$ actually only depends on the (dimensionless) product $k^*=\rho|k|$.
This gives
\begin{linenomath}
\begin{align*}
\nabla\cdot(\v(t,\theta,x;R)R)=\begin{cases}
-\pi J\Rb |k|\rho^2 f(|k|,\rho) e^{i(q\theta+k\cdot x)}c(t)  +\mathcal{O}(c(t)^2)  & \text{ for } q=\pm 1\\
-2\pi J_0\Rb |k|\rho^2  f(|k|,\rho) e^{ik\cdot x}c(t)  +\mathcal{O}(c(t)^2)  & \text{ for } q=0\\
0 & \text{ otherwise}\end{cases}
\end{align*}
\end{linenomath}

\bigskip
\noindent{\underline{\bf{Term }  $\frac{\partial}{\partial\theta}(\w(t,\theta,x;R)R)$}}

\medskip
One computes that 
 \begin{linenomath}
\begin{align*}
\w(t,x,\theta;R)&=K\int_{0}^{2\pi} \int_{D_{\rho}(0)} \sin(\thetat-\theta))\, \left(\Rb+c(t)e^{i(q\thetat+k\cdot (x+\xt))}\right)\, d^2\xt\, d\thetat  \\
&=K\, c(t)e^{i k\cdot x}\int_{D_{\rho}(0)} \int_0^{2\pi}\sin(\thetat-\theta))e^{iq\thetat}d\thetat\, e^{i k\cdot\xt}\, d^2\xt\\
&=\begin{cases}
i\pi\, q\, K c(t) e^{i(q\theta+k\cdot x)}\int_{D_\rho(0)}e^{ik\cdot \xt}d^2\xt & \text{ for } q=\pm 1\\
0 & \text{ otherwise}
\end{cases}
\end{align*}
 \end{linenomath}
Here the identity $\int_0^{2\pi}e^{iq\thetat}\sin(\thetat-\theta)d\thetat=\begin{cases}i\,q\pi e^{iq\theta} & q=\pm 1\\ 0 &\text{otherwise}\end{cases}$ was used.
To evaluate the integral in the formula for $\w(t,x,\theta;R)$, switch again to polar coordinates  $x=r\cos\phi\, \frac{k}{|k|}, y+r\sin\phi\, k^{\perp}$. This gives
\begin{linenomath}
\begin{align*}
  \int_{D_\rho(0)}e^{ik\cdot \xt}d^2\xt&=\int_0^{2\pi}\int_0^\rho e^{i|k|r\cos\phi}rdr\,d\phi=\rho^2\, g(|k|,\rho).
\end{align*}
\end{linenomath}
Here $g(|k|,\rho)=\frac{1}{\rho^2}\int_0^{2\pi}\int_0^\rho \cos({|k|r\cos\phi})rdr\,d\phi=\int_0^{2\pi}\int_0^1 \cos({\rho|k|r\cos\phi})rdr\,d\phi$ is a real-valued function.
Note that  $g(|k|,\rho)$ actually only depends on the (dimensionless) product $k^*=\rho|k|$.
This gives
 \begin{linenomath}
\begin{align*}
\frac{\partial}{\partial\theta}(\w(t,\theta,x;R)R)=\begin{cases}
-\pi K\Rb\rho^2 g(|k|,\rho) e^{i(q\theta+k\cdot x)}c(t)  +\mathcal{O}(c(t)^2)  & \text{ for } q=\pm 1\\
0 & \text{ otherwise}\end{cases}
\end{align*}
 \end{linenomath}

\bigskip
\noindent{\underline{\bf{Complete linearized equation}}
By inserting (\ref{linR}) into (\ref{diffeq}), using the above computations and neglecting terms of second order and higher in $c(t)$, we obtain the result of Proposition~\ref{linearization} in $d=2$ spatial dimensions:

\begin{proposition}
\item For $q=0$, the linearization is
\[
    \frac{d c}{dt}=(-|k|^2\D+2 J_0\Rb\rho\,  F_2(|\rho k|))\, c  
\]
\item For $|q|\neq 0, 1$, the  linearization is
\[
        \frac{d c}{dt}=(-|k|^2\D-q^2\Dc)\, c
\]

\item For $|q|=1$, the  linearization is
\[
        \frac{d c}{dt}=(-|k|^2 \D-\Dc+ J_0\Rb\rho\,  F_2(|\rho k|) + \Rb K\rho^2\, G_2(|\rho k|) )\, c.
\]
\end{proposition}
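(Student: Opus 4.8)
The plan is to compute the two nonlocal terms $\nabla\cdot(\v R)$ and $\partial_\theta(\w R)$ explicitly for an ansatz of the form $R=\Rb+c(t)e^{i(q\theta+k\cdot x)}$, then substitute into \eqref{diffeq} and keep only the terms linear in $c(t)$. The diffusive terms are immediate: $\D\nabla^2 R$ contributes $-\D|k|^2 c(t)e^{i(q\theta+k\cdot x)}$ and $\Dc\partial_\theta^2 R$ contributes $-\Dc q^2 c(t)e^{i(q\theta+k\cdot x)}$. So the content is entirely in the advection terms, and there the structure is that the constant part $\Rb$ of $R$ produces no spatial gradient (hence contributes nothing to $\nabla\cdot(\v R)$ at linear order once one observes $\v$ evaluated on the constant state is itself constant in $x$ and vanishes by the odd symmetry of $\xt/|\xt|$ over $D_\rho(0)$), while the product $\v\cdot\nabla R$ and $(\partial_\theta\w)R$ and $\w\,\partial_\theta R$ are already quadratic in $c(t)$ unless one of the factors is the constant state.

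First I would substitute the ansatz into the defining integral \eqref{v} for $\v$. The key observation is the angular selection rule: $\int_0^{2\pi}e^{iq\thetat}\cos(\theta-\thetat)\,d\thetat$ equals $\pi e^{iq\theta}$ when $q=\pm1$ and $0$ otherwise, which is why the cases $q=0$, $q=\pm1$, and $|q|\ge 2$ separate cleanly. This reduces $\v$ to $c(t)$ times a constant multiple of $e^{i(q\theta+k\cdot x)}$ (for $q=\pm1$) or $e^{ik\cdot x}$ (for $q=0$) times the vector integral $\int_{D_\rho(0)}e^{ik\cdot\xt}\frac{\xt}{|\xt|}\,d^2\xt$. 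I would evaluate that integral in polar coordinates aligned with $k$: the component along $k$ survives and gives $i\rho^2 f(|k|,\rho)\,k/|k|$ for a real scalar function $f$ depending only on $k^*=\rho|k|$, while the component perpendicular to $k$ integrates to zero by the $\phi\mapsto-\phi$ symmetry. Taking the divergence then brings down a factor $i k$, so $i\cdot i=-1$ and one gets $-\pi J\Rb|k|\rho^2 f(|k|,\rho)\,e^{i(q\theta+k\cdot x)}c(t)$ for $q=\pm1$ (with $J_0$ replacing $J$ and a factor $2$ for $q=0$), plus $\mathcal O(c^2)$. An entirely parallel computation handles $\w$ from \eqref{w}, using instead $\int_0^{2\pi}e^{iq\thetat}\sin(\thetat-\theta)\,d\thetat = iq\pi e^{iq\theta}$ for $q=\pm1$, leading to the scalar integral $\int_{D_\rho(0)}e^{ik\cdot\xt}\,d^2\xt=\rho^2 g(|k|,\rho)$ and, after $\partial_\theta$ (which brings down $iq$, with $q^2=1$), the term $-\pi K\Rb\rho^2 g(|k|,\rho)\,e^{i(q\theta+k\cdot x)}c(t)$. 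Collecting everything and cancelling the common exponential factor yields the three ODEs for $c(t)$; identifying $F_d$ and $G_d$ with the appropriate constant multiples of $f$ and $g$ (and matching normalizations so that $\sigma_1,\sigma_3$ take the stated dimensionless form) finishes the proof, and the one-dimensional case is the same computation with $D_\rho(0)$ an interval and $\xt/|\xt|=\sgn(\xt)$, giving the elementary closed forms $F_1,G_1$.

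The main obstacle, such as it is, is bookkeeping rather than conceptual: one must be careful that the only linear-in-$c$ contribution to the advection terms comes from pairing the \emph{nonlocal} factor (evaluated at the perturbation, hence $\propto c$) with the \emph{constant} background $\Rb$ in $\v R$ and $\w R$ — the cross terms where $\v$ or $\w$ is evaluated on $\Rb$ are either identically zero (by the odd symmetry of $\xt/|\xt|$ for $\v$, and because $\w$ on the constant state is an odd integral of $\sin$ over $\thetat$) or multiply $\nabla R$ / $\partial_\theta R$ which is itself $\mathcal O(c)$, making the product $\mathcal O(c^2)$. Keeping that accounting straight, together with correctly tracking the two factors of $i$ (one from evaluating the vector/scalar integral, one from applying $\nabla$ or $\partial_\theta$) so that the real temporal eigenvalue emerges, is the only place an error could creep in. Everything else is routine polar-coordinate integration and trigonometric orthogonality.
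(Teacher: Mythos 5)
Your proposal is correct and follows essentially the same route as the paper's own computation in Appendix~\ref{appendix_lin_stab}: the same ansatz $R=\Rb+c(t)e^{i(q\theta+k\cdot x)}$, the same trigonometric orthogonality identities that split the cases $q=0$, $q=\pm 1$, $|q|\geq 2$ (with the $J_0$ term surviving at $q=0$ and the $J\cos$, $K\sin$ terms at $q=\pm1$), the same polar-coordinate evaluation giving $i\rho^2 f(|k|,\rho)\,k/|k|$ and $\rho^2 g(|k|,\rho)$, and the same linear-order bookkeeping pairing the $\mathcal{O}(c)$ nonlocal velocities with the constant background $\Rb$. The only cosmetic imprecision is calling $F_2$ a ``constant multiple'' of $f$ (it is $\pi k^* f$, the extra $k^*$ coming from the $|k|$ produced by the divergence), but your derived terms already carry that factor, so the identification goes through exactly as in the paper.
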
 
Explicitly, the two functions $F_2(k^*)$ and $G_2(k^*)$ for real $k^*\geq 0$ are given by the following formulas, where we used Mathematica to 
get explicit expressions for the integrals:
 \begin{linenomath}
\begin{align*}
F_2(k^*)&=\pi k^*\, f(k^*,1)=\pi k^*\int_0^1\int_0^{2\pi}\sin(k^*r\cos\phi)r\cos\phi\, d\phi\, dr\\
&=  \pi^3 ({\mathcal{J}}_1(k^*)\, H_0(k^*)-\mathcal{J}_0(k^*)\, H_1(k^*) )
\end{align*}
 \end{linenomath}
and
 \begin{linenomath}
\begin{align*}
G_2(k^*)&=\pi g(k^*,1)=\pi \int_0^1\int_0^{2\pi}\cos(k^*r\cos\phi)r d\phi dr\\
&=  2\pi^2 \frac{1}{k^*}{\mathcal{J}}_1(k^*).
\end{align*}
 \end{linenomath}
Here $\mathcal{J}_0, \mathcal{J}_1$ denotes Bessel functions of the first and second kind, respectively, and 
$H_0,H_1$ denote Struve functions \cite{NISTsite}. Plots of $F_2$ and $G_2$ are shown in Fig~\ref{figFG}.

\section{Constraints on $J_0^*, J^*,K^*$}\label{constr_j0jk}
As pointed out in the main tetxt, the PDE model  (\ref{diffeq}) hinges on the assumption that patterns have sufficiently large characteristic scales that a description using cell density 
is appropriate. More specifically, we are only interested in values of $J_0, J$ and $K$ which lead to patterns with a characteristic length scale comparable to the cell interaction radius $\rho$  
or larger. In this Appendix, we identify a region in parameter space which satisfies this constraints. 

For this, consider the linearization discussed in section~\ref{linstab}. For a linearization pattern (mode) with spatial
 wave vector $k$, a characteristic length scale is the distance between peaks and valleys $\ell=\pi/|k|$. The condition that $\ell$ does not exceed the
interaction radius $\rho$ is thus equivalent to $k^*=\rho k<\pi$. Proposition~\ref{linearization}
shows that the growth rates of different modes are governed by the functions $\sigma_1(k^*)$ and $\sigma_3(k^*)$ given in $d$ dimensions by
 \begin{linenomath}
\begin{align*}
\sigma_1(k^*)&=-(k^*)^2+2 J_0^*\,  F_d(k^*)\\
\sigma_3(k^*)&=-(k^*)^2 -\Dc^*+ J^*  F_d(k^*) + K^* G_d(k^*) 
\end{align*}
 \end{linenomath}
where
\[
J^*=\frac{\bar{R}\rho^{d+1}}{\D}J, \quad J_0^*=\frac{\bar{R}\rho^{d+1}}{\D}J_0,\quad 
K^*=\frac{\bar{R}\rho^{d+2}}{\D}K
\] 
The dominant modes are those
that correspond to global maxima of those functions.

We now use the following condition to constrain the parameters $J_0,J$ and $K$:
We require 
\begin{equation}\label{cond}
\max_{k^*\geq\pi}(\sigma_i(k^*))<\max(0,\max_{k^*\geq 0}\sigma_i(k^*))\quad\text{for }i=1,3
\end{equation}
This somewhat cumbersome formula simply means that we require that the dominant modes $k^*_{\rm{dom}} $for both $\sigma_1$ and $\sigma_3$ to 
either be less than $\pi$ or have negative temporal growth rates. In other words, a pattern with length scale less than the interaction length $\rho$ cannot be the fastest growing pattern.

Using the explicit forms of $F_d(k^*)$ and $G_d(k^*)$ given in section~\ref{linstab}, one can derive with a combination of simple calculus arguments and numerical investigation the following results.
(In dimension 2, these are not sharp.)

\begin{proposition}
\begin{enumerate}
\item \label{range1} In $d=1$ spatial dimensions, the condition (\ref{cond}) is satified in the following parameter range:
\[
           -\pi<K^*<\infty, \quad -\infty<J_0^*,J^*<\infty.
\]
\item \label{range2}In $d=2$ spatial dimensions, the condition (\ref{cond}) is satifies in the following parameter range:
\[
           -1.4<K^*<\infty ,\quad -7.3<J^*_0<3.1,\quad -3.65<J^*<1.9
\]
\end{enumerate}
\end{proposition}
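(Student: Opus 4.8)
The plan is to read condition~(\ref{cond}) as the statement that the globally dominant mode --- the maximizer over $k^*\ge 0$ of $\sigma_i$ ($i=1,3$) --- either has wavenumber $k^*<\pi$ or has nonpositive temporal growth rate; equivalently, that the tail supremum $\sup_{k^*\ge\pi}\sigma_i(k^*)$ lies strictly below $\max\bigl(0,\sup_{k^*\ge 0}\sigma_i(k^*)\bigr)$. For each of $\sigma_1,\sigma_3$ the argument then has two ingredients. First, since $F_d$ and $G_d$ are bounded and $-(k^*)^2\le-\pi^2$ on the tail, one has $\sigma_i(k^*)\le-(k^*)^2+C(J_0^*,J^*,K^*)$ there, so $\sigma_i(k^*)<0$ once $k^*$ exceeds an explicit cutoff $k^*_{\mathrm{cut}}$; this reduces the tail supremum to a supremum over the compact interval $[\pi,k^*_{\mathrm{cut}}]$. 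Second, one either shows this supremum is already negative (so~(\ref{cond}) holds because its right-hand side is $\ge 0$), or one produces a point $k^*_0\in[0,\pi)$ with $\sigma_i(k^*_0)$ strictly above the tail supremum, using Propositions~\ref{propj0} and~\ref{propjk} to ensure that in this case $\max\bigl(0,\sup_{k^*\ge 0}\sigma_i\bigr)=\sup_{k^*\ge 0}\sigma_i$. Since adding the term $-\Dc^*\le 0$ shifts $\sigma_3$ down uniformly and can therefore only help~(\ref{cond}), it suffices to treat $\Dc^*=0$.

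In $d=1$ this can be carried out explicitly because $F_1(k^*)=2\pi(1-\cos k^*)$ and $G_1(k^*)=2\pi\sin(k^*)/k^*$ are elementary. For $\sigma_1(k^*)=-(k^*)^2+4\pi J_0^*(1-\cos k^*)$: if $J_0^*\le 0$ then $\sigma_1\le-(k^*)^2<0$ on the tail and we are done; if $J_0^*>0$ one checks that $\sigma_1$ is decreasing on $[\pi,2\pi]$ and that $\sigma_1(k^*)\le-(k^*)^2+8\pi J_0^*<\sigma_1(\pi)$ for $k^*\ge2\pi$, so the tail supremum equals $\sigma_1(\pi)=-\pi^2+8\pi J_0^*$; since $\sigma_1'(\pi)=-2\pi<0$, $\sigma_1$ is larger just to the left of $\pi$, hence $\sup_{[0,\pi)}\sigma_1>\sigma_1(\pi)$, and this together with $\sigma_1(\pi)>0$ for $J_0^*>\pi/8$ and the sign statement of Proposition~\ref{propj0} for $0<J_0^*\le\pi/8$ closes~(\ref{cond}) for every $J_0^*\in\mathbb{R}$. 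For $\sigma_3$ the pivotal computation is
\[
\sigma_3'(\pi)=-2\pi-2K^*,
\]
so $\sigma_3$ is strictly decreasing at $k^*=\pi$ exactly when $K^*>-\pi$ --- this is the origin of the threshold in the statement. For $K^*>-\pi$ a short case analysis on the signs of $J^*$ and $K^*$, using this derivative, the tail bound, and $\sigma_3(0)=2\pi K^*$, shows that every positive value of $\sigma_3$ on $[\pi,\infty)$ is beaten by a value on $[0,\pi)$, while in the remaining cases $\sup_{[\pi,\infty)}\sigma_3<0$. Conversely, at $K^*=-\pi$ the point $k^*=\pi$ becomes a critical point of $\sigma_3$ which, for $J^*>\pi/4$, is the global maximizer with $\sigma_3(\pi)>0$; thus~(\ref{cond}) fails there and the range $K^*>-\pi$ cannot be enlarged.

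In $d=2$ the scheme is identical, but $F_2=\pi^3(\mathcal{J}_1H_0-\mathcal{J}_0H_1)$ and $G_2=2\pi^2\mathcal{J}_1(k^*)/k^*$ are not monotone, so there are no closed-form thresholds. Instead one uses the uniform bounds together with the decay of $F_2$ and $G_2$ to produce, for each fixed value of $J_0^*$ (resp. of $(J^*,K^*)$), an explicit $k^*_{\mathrm{cut}}$ beyond which $\sigma_i<0$; on the compact remainder $[\pi,k^*_{\mathrm{cut}}]$ one bounds $\sup\sigma_i$, on $[0,\pi]$ one bounds $\max\sigma_i$ from below, and~(\ref{cond}) collapses to an explicit inequality among values of Bessel and Struve functions, which is verified numerically (rigorously, via interval arithmetic). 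This yields the stated box $-1.4<K^*$, $-7.3<J_0^*<3.1$, $-3.65<J^*<1.9$; its endpoints are the parameter values at which the dominant mode of $\sigma_1$ (resp. $\sigma_3$) crosses the wavenumber $k^*=\pi$, driven by a secondary extremum of $F_2$ (resp. of $J^*F_2+K^*G_2$) lying beyond $\pi$, and, as the statement notes, they are not claimed to be sharp.

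The main obstacle is exactly this passage to $d=2$: unlike the one-dimensional case, where $\sigma_i'(\pi)$ and the tail behaviour of $\sigma_i$ are transparent, here one must control $F_2$, $G_2$ and their derivatives over a genuine range of $k^*$, so the proof is unavoidably part analytic (tail estimates forcing $\sigma_i<0$ past $k^*_{\mathrm{cut}}$) and part rigorous numerics (bounding $\sigma_i$ on the compact middle range), and the resulting constants will not be optimal. A secondary, routine nuisance in both dimensions is the bookkeeping for the borderline subcases, such as $\sigma_i(\pi)=0$ or a distant bump of $F_d$ competing with the one near $k^*=\pi$.
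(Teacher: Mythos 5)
Your proposal takes essentially the same route as the paper, whose entire justification is a parenthetical remark that the 1D range follows from the behavior of $\sigma_3$ at $k^*=\pi$ (threshold $K^*>-\pi$) while the 2D box ``was obtained numerically using Mathematica'' -- exactly your split into elementary calculus in 1D and tail estimates plus numerics on a compact $k^*$-range in 2D. Your computation $\sigma_3'(\pi)=-2\pi-2K^*$ in fact clarifies what the paper's terse claim ``$K^*>-\pi$ guarantees $\sigma_3(\pi)<0$'' must mean (it is presumably a slip for $\sigma_3'(\pi)<0$, since $\sigma_3(\pi)=4\pi J^*-\pi^2-\Dc^*$ does not involve $K^*$), and your filled-in case analysis and sharpness check at $K^*=-\pi$ simply supply details the paper omits.
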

(For (\ref{range1}), note that $K^*>-\pi$ guarantees $\sigma_3(\pi)<0,$ which implies that the condition (\ref{cond}) is satisfied.
The result in  (\ref{range2}) was obtained numerically using Mathematica.)

\section{Solutions of the form $R(t,x,\theta)=R(t,\theta)$}\label{appendixtheta}
We consider the special case that the cell density $R(t,x,\theta)$ in (\ref{diffeq}) is constant with respect to the spatial position $x$:
\[
R(t,x,\theta)=R(t,\theta).
\]
Strictly speaking, this is 
of course only applicable in the case that the initial conditions are perfectly spatially homogeneous.
But the analysis is more generally relevant in the case that all spatially dependent modes have negative growth rates, see Propositions~\ref{propj0} and \ref{propjk}.
The differential equation for $R(t,\theta)$ takes the form
\begin{equation}\label{diffeqtheta}
\frac{\partial R}{\partial t}=\Dc \frac{\partial^2 R}{\partial\theta^2}-\,\frac{\partial }{\partial\theta}(\w(t,\theta;R)R).
\end{equation}
with 
\newcommand{\Dr}{|D_\rho|}
\begin{equation*}
\w(t,\theta;R)=K\,\Dr \int_{0}^{2\pi} \sin(\thetat-\theta)\, R(t,\thetat)\, d\thetat ,
\end{equation*}
where $\Dr$ is the $n-$dimensional area of the ball of radius $\rho$ (i.e. the length of the interaction neighborhood in one spatial dimension or its area in two dimensions.)

Consider the Fourier expansion of $R(t,\theta)$:
\begin{equation}\label{fourier}
R(t,\theta)=\sum_{j=-\infty}^{\infty}A_j(t)e^{ij\theta}.
\end{equation}
We consider real valued $R(t,\theta)$, so $A_{-j}(t)=\bar{A}_j(t)$. Note that $A_j(t=0)$ are the Fourier coefficients of the
initial condition $R(t=0,\theta)=R_0(\theta)$.

We have the following proposition:
\begin{proposition}
The Fourier coefficients $A_j(t)$ satisfy the infinite system of differential equations
\begin{equation}\label{systemaj}
\frac{d A_j}{dt}=j\Dr \pi K(A_1(t)A_{j-1}(t)-\bar{A}_1(t)A_{j+1}(t))-j^2\Dc A_j(t) \quad \quad {\text{ for }}j\in\mathbb{Z}
\end{equation}
In particular
\[
\frac{d}{dt}A_0(t)=0,
\]
so that $A_0(t)=const=\frac{1}{2\pi}\int_0^{2\pi}R_0(\theta)d\theta=\Rb$
\end{proposition}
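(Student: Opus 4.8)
The plan is to substitute the Fourier expansion \eqref{fourier} directly into the reduced equation \eqref{diffeqtheta} and match coefficients of $e^{ij\theta}$. First I would compute the nonlocal clock velocity $\w(t,\theta;R)$ in terms of the Fourier coefficients. Writing $\sin(\thetat-\theta)=\frac{1}{2i}(e^{i(\thetat-\theta)}-e^{-i(\thetat-\theta)})$ and using the orthogonality relation $\int_0^{2\pi}e^{i\ell\thetat}e^{\pm i m\thetat}\,d\thetat$, only the $j=\pm 1$ Fourier modes of $R$ survive the integration. Explicitly,
\begin{linenomath}
\begin{align*}
\w(t,\theta;R)=K\Dr\int_0^{2\pi}\frac{e^{i(\thetat-\theta)}-e^{-i(\thetat-\theta)}}{2i}\sum_\ell A_\ell(t)e^{i\ell\thetat}\,d\thetat
=\frac{\pi K\Dr}{i}\left(A_{-1}(t)e^{-i\theta}-A_1(t)e^{i\theta}\right),
\end{align*}
\end{linenomath}
which, using $A_{-1}=\bar A_1$, is a trigonometric polynomial of degree one in $\theta$ with coefficients built from $A_1$ and $\bar A_1$.

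Next I would form the product $\w(t,\theta;R)\,R(t,\theta)$. Since $\w$ contains only the frequencies $e^{\pm i\theta}$, multiplying by $R=\sum_\ell A_\ell e^{i\ell\theta}$ shifts each mode by $\pm 1$, so the coefficient of $e^{ij\theta}$ in $\w R$ involves only $A_{j-1}$ and $A_{j+1}$ (times the appropriate multiples of $A_1$, $\bar A_1$). Then $\partial_\theta(\w R)$ contributes the factor $ij$ to the $j$th coefficient, and $\Dc\,\partial_\theta^2 R$ contributes $-j^2\Dc A_j$. Equating the coefficient of $e^{ij\theta}$ on both sides of \eqref{diffeqtheta} yields exactly \eqref{systemaj}; I would track the factors of $2i$ and $\pi$ carefully to land on the stated constant $j\Dr\pi K$. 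The special case $j=0$ is immediate: the factor $j$ kills the advective term and $j^2=0$ kills the diffusive term, so $\tfrac{d}{dt}A_0=0$, and evaluating $A_0(0)=\frac{1}{2\pi}\int_0^{2\pi}R_0(\theta)\,d\theta$ identifies this conserved value with $\Rb$ (consistent with conservation of total mass, since $2\pi A_0$ is, up to the fixed spatial volume factor, the total number of oscillators).

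The computation is essentially routine; the only real care needed is bookkeeping of the complex constants and the index shifts, and a small justification that term-by-term differentiation of the Fourier series is legitimate — which is fine for a smooth strong solution, or can be read purely formally as the problem statement suggests. I do not anticipate a genuine obstacle; the main thing to get right is the precise coefficient $j\Dr\pi K$ in \eqref{systemaj}, which requires combining the $1/(2i)$ from the sine, the $i$ from $\partial_\theta$, and the $2\pi$ from normalization correctly.
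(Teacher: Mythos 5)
Your proposal is correct and follows essentially the same route as the paper: compute $\w(t,\theta;R)$ from the Fourier expansion using the exponential form of the sine (your expression $\frac{\pi K\Dr}{i}(A_{-1}e^{-i\theta}-A_1e^{i\theta})$ agrees with the paper's $i\pi K\Dr(-A_{-1}e^{-i\theta}+A_1e^{i\theta})$ since $1/i=-i$), then insert into \eqref{diffeqtheta}, note the $\pm1$ mode shifts, and match coefficients of $e^{ij\theta}$, with the $j=0$ case giving conservation of $A_0=\Rb$. The only difference is that the paper writes out the shifted sum explicitly before matching, whereas you leave that bookkeeping implicit, but the argument is the same.
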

\begin{proof}
Compute first $\w(t,\theta;R)$ with the Fourier expansion~\ref{fourier}:
 \begin{linenomath}
\begin{align*}
\w(t,\theta;R)& = \frac{K}{2i}\,\Dr \int_{0}^{2\pi}(e^{i(\thetat-\theta)})-e^{-i(\thetat-\theta))}) \, R(t,\thetat)\, d\thetat \\
& = \frac{K}{2i}\,\Dr \sum_{j=-\infty}^{\infty}A_j(t)\left(e^{-i\theta}\int_{0}^{2\pi} e^{i(j+1)\thetat}\,d\thetat  
-e^{i\theta}\int_{0}^{2\pi} e^{i(j-1)\thetat}d\thetat\right) \\
& = i\pi K\Dr\left(-A_{-1}(t)e^{-i\theta}+A_1(t)e^{i\theta}\right).
\end{align*}
\end{linenomath}
Using this expression in the differential equation (\ref{diffeqtheta}) yields
 \begin{linenomath}
\begin{align*}
\frac{\partial R}{\partial t}&=\sum_{j=-\infty}^{\infty} \left(K\Dr\pi\left( -(j-1)A_{-1}(t)A_j(t)e^{i(j-1)\theta}+(j+1)A_1(t)A_j(t)e^{i(j+1)\theta}\right)-j^2\Dc A_j(t) e^{ij\theta}\right)
\end{align*}
\end{linenomath}
Comparing Fourier coefficients of the left and right hand sides gives the system of equations for $A_j(t)$ in (\ref{systemaj})
\end{proof}

\begin{theorem}\label{thmconvincoherent}
For $K<0$ and $\Dc>0$, the incoherent state of the space-independent problem (\ref{diffeqtheta}) 
is globally stable, i.e. $R(t,\theta)$ converges to the constant $\Rb=\frac{1}{2\pi}\int_0^{2\pi}R(t=0,\theta)d\theta$ as $t\to\infty$.
\end{theorem}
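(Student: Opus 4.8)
My plan is to prove convergence directly from the Fourier formulation~(\ref{systemaj}) established in the preceding proposition, tracking the single real quantity
\[
\mathcal E(t):=\sum_{j\neq 0}|A_j(t)|^2=\tfrac{1}{2\pi}\bigl\|R(t,\cdot)-\Rb\bigr\|_{L^2(\TT)}^2,
\]
and showing $\mathcal E(t)\to 0$. The one structural input I need beyond~(\ref{systemaj}) is the uniform bound $|A_j(t)|\le\Rb$ for every $j$ and $t$: since $R\ge 0$ (the Nonnegativity Theorem, applied to the space-homogeneous equation~(\ref{diffeqtheta})) and the total mass $\int_0^{2\pi}R\,d\theta=2\pi A_0=2\pi\Rb$ is conserved, $|A_j(t)|\le\tfrac1{2\pi}\int_0^{2\pi}R(t,\theta)\,d\theta=\Rb$. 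This cheap $L^1$/positivity bound is what makes the nonlinear couplings in~(\ref{systemaj}) manageable.

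\textbf{Step 1: exponential decay of the fundamental mode.} Taking $j=1$ in~(\ref{systemaj}) and using $A_0\equiv\Rb$ gives $\dot A_1=(\pi\Dr K\Rb-\Dc)A_1-\pi\Dr K\,\bar A_1A_2$, hence
\[
\tfrac{d}{dt}|A_1|^2=2(\pi\Dr K\Rb-\Dc)|A_1|^2-2\pi\Dr K\,\mathrm{Re}\bigl(\bar A_1^2 A_2\bigr).
\]
For $K<0$ the linear coefficient is $2(\pi\Dr K\Rb-\Dc)=-2\pi\Dr|K|\Rb-2\Dc$, while $\bigl|\mathrm{Re}(\bar A_1^2 A_2)\bigr|\le|A_1|^2|A_2|\le\Rb|A_1|^2$ forces $-2\pi\Dr K\,\mathrm{Re}(\bar A_1^2 A_2)\le 2\pi\Dr|K|\Rb|A_1|^2$. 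The two $2\pi\Dr|K|\Rb|A_1|^2$ terms cancel, leaving $\tfrac{d}{dt}|A_1|^2\le-2\Dc|A_1|^2$, so $|A_1(t)|\le|A_1(0)|e^{-\Dc t}$. (Already at this point the Kuramoto order parameter $r(t)=|A_1(t)|/\Rb\to0$, which is the ``incoherence'' content of the theorem.)

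\textbf{Step 2: closing the loop for all modes.} Summing~(\ref{systemaj}) against $\bar A_j$ and adding the complex conjugate,
\[
\tfrac{d}{dt}\sum_{j}|A_j|^2=2\pi\Dr K\,\mathrm{Re}(\Sigma)-2\Dc\sum_{j}j^2|A_j|^2,\qquad \Sigma:=\sum_{j} j\bigl(A_1A_{j-1}\bar A_j-\bar A_1A_{j+1}\bar A_j\bigr).
\]
Reindexing the second group ($j\mapsto j-1$) collapses the sum: $\mathrm{Re}(\Sigma)=\sum_j\mathrm{Re}(A_1A_{j-1}\bar A_j)=\mathrm{Re}\bigl(A_1\sum_j A_{j-1}\bar A_j\bigr)$, and the Parseval-type identity $\sum_j A_{j-1}\bar A_j=\tfrac1{2\pi}\int_0^{2\pi}R^2e^{i\theta}d\theta$ (valid since $R$ is real) gives $|\mathrm{Re}(\Sigma)|\le|A_1|\sum_j|A_j|^2=|A_1|\bigl(\mathcal E+\Rb^2\bigr)$. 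Together with $\sum_{j\neq0}j^2|A_j|^2\ge\mathcal E$ this yields
\[
\tfrac{d\mathcal E}{dt}\le\bigl(2\pi\Dr|K|\,|A_1(t)|-2\Dc\bigr)\mathcal E+2\pi\Dr|K|\Rb^2\,|A_1(t)| .
\]
Since $|A_1(t)|$ is integrable on $[0,\infty)$ and tends to $0$ (Step 1), a Gronwall estimate first rules out blow-up of $\mathcal E$, and then for $t$ large enough the bracket is $\le-\Dc$, so a comparison/Duhamel argument forces $\mathcal E(t)\to0$. Hence $R(t,\cdot)\to\Rb$ in $L^2(\TT)$; if one wants uniform convergence, standard parabolic bootstrapping (here $\Dc>0$ is essential) upgrades it to $C^k$.

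\textbf{Main obstacle.} The computation is short, but the \emph{structure} is the point: one cannot simply show $\|R-\Rb\|_{L^2}$ is monotone, because for $K<0$ the advective term $\mathrm{Re}(\Sigma)$ in $\tfrac{d}{dt}\|R-\Rb\|^2$ has no definite sign (one can exhibit nonnegative densities with $\mathrm{Re}(\Sigma)<0$, e.g.\ a broad low bump near $\theta=0$ together with a tall thin spike near $\theta=\pi$). So the two-stage argument is necessary: first establish decay of $A_1$---which is exactly where both $K<0$ and the bound $|A_j|\le\Rb$ are used, to let the bare diffusion rate $\Dc$ dominate the nonlinearity---and only then treat the advection as an exponentially small forcing. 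The hypothesis $\Dc>0$ is equally indispensable, since it supplies the spectral gap driving every estimate above; for $\Dc=0$ this argument collapses, consistent with the corresponding question being left open in the Discussion. (One could instead use the gradient-flow/free-energy structure $F[R]=\Dc\int R\ln R-\tfrac{K\Dr}{2}\bigl|\int_0^{2\pi}Re^{i\theta}d\theta\bigr|^2$, bounded below for $K<0$, together with LaSalle's invariance principle and a von-Mises self-consistency computation ruling out nontrivial steady states; but that route additionally needs a compactness argument, so the Fourier estimate above is the more economical one.)
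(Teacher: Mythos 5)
Your proposal is correct, but it takes a genuinely different route from the paper. The paper's proof works with the weighted functional $u(t)=\sum_{k\ge1}\frac1k|A_k|^2$: with the $1/k$ weights, differentiating term by term and telescoping collapses the convolution terms of (\ref{systemaj}) to a single boundary term, giving exactly $\frac{du}{dt}=2K\Dr\pi A_0|A_1|^2-2\Dc\sum_k k|A_k|^2$, which is manifestly $\le-2\Dc u$ once $K<0$ and $A_0\ge0$; a single Gronwall step finishes. You instead run a two-stage argument on the unweighted $L^2$ energy: first the a priori bound $|A_j|\le\Rb$ (nonnegativity plus mass conservation, an input the paper's proof never needs) combined with $K<0$ cancels the $A_2$-coupling in the $j=1$ equation and yields $|A_1(t)|\le|A_1(0)|e^{-\Dc t}$; then the reindexed identity $\mathrm{Re}(\Sigma)=\mathrm{Re}\bigl(A_1\sum_j A_{j-1}\bar A_j\bigr)$ lets you treat the advection as a forcing controlled by the decaying $|A_1|$, and Gronwall/Duhamel closes the loop. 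Both proofs hinge on essentially the same telescoping cancellation in the quadratic term, and both share the same mild rigor caveats (term-by-term differentiation and reindexing of infinite sums, justified for smooth solutions; your reindexing needs the slightly stronger tail decay $j|A_j|\to0$, harmless for classical solutions). What each buys: the paper's argument is shorter, needs no positivity input, and its functional $u$ remains monotone even at $\Dc=0$, which is precisely what underlies the conjecture stated after the theorem; your argument yields an explicit exponential rate for the order parameter $r(t)=|A_1|/\Rb$ and convergence in the stronger unweighted $L^2$ norm (the paper's $u\to0$ is a $1/k$-weighted, hence weaker, mode of convergence), and your observation that a one-step unweighted estimate fails because $\mathrm{Re}(\Sigma)$ has no sign is a correct diagnosis of why some extra structure — your two stages, or the paper's $1/k$ weights — is required. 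The trade-off is that your Step 1 rate degenerates as $\Dc\to0$, so your route says nothing about the open case $\Dc=0$, whereas the paper's Lyapunov function still gives monotonicity there.
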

\begin{proof}
Consider the function
\[
u(t)=\sum_{k=1}^{\infty}\frac{1}{k}|A_k|^2
\]
which converges absolutely by Parseval's Theorem. One computes that
\[
\frac{1}{k}\frac{d}{dt}|A_k|^2=K\Dr\pi\( A_1A_{k-1}\bar{A}_k-\bar{A}_1A_{k+1}\bar{A}_{k}+\bar{A}_1\bar{A}_{k-1}A_k-A_1\bar{A}_{k+1}A_k  \)-2\Dc k|A_k|^2.
\]
Using this, one finds that 
 \begin{linenomath}
\begin{align*}
\frac{du}{dt}&=\sum_{k=1}^{N}\frac{1}{k}\frac{d}{dt}|A_k|^2\\ 
&=K\Dr \pi\left( 2A_0|A_1|^2-\bar{A}_1A_{N+1}\bar{A}_N-A_1\bar{A}_{N+1}A_N\right)-2\Dc k|A_k|^2
\end{align*}
\end{linenomath}
due to a telescoping sum. Note that the remainder goes to zero
due to the fact that $\sum_{k=1}^{\infty}|A_k|^2$ converges. Therefore
\[
  \frac{du}{dt}=2K\Dr \pi A_0|A_1|^2-2\Dc \sum_{k=1}^{\infty}k|A_k|^2
\] 
If $K<0$, we can estimate
\[
\frac{du}{dt}\leq -2\Dc u(t).
\]
Provided $\Dc>0$, Gronwall's lemma implies $u(t)\to 0$ as $t\to\infty$. This means that $|A_k(t)|^2\to 0$
as $t\to\infty$ for all $k=1,2,\ldots$. Thus
\[
R(t,\theta)\to A_0=\frac{1}{2\pi}\int_0^{2\pi}R(t=0,\theta)d\theta.
\]
\end{proof}
We strongly expect the result to still hold for the case of zero clock diffusion ($\Dc=0$). In this case
\[
   \frac{du}{dt}=2K\Dr \pi A_0|A_1|^2
\]
This means that $u(t)=\sum_{k=1}^{\infty}\frac{1}{k}|A_k|^2$ is strictly decreasing, as $A_0>0$ since we assume nonnegative initial conditions. We conjecture that in fact $u(t)\to 0$ as $t\to\infty$. This would then
imply the statement of Theorem~ \ref{thmconvincoherent}.

\end{document}